\numberwithin{equation}{section}
\newtheorem{theorem}{Theorem}[section]
\newtheorem{lemma}[theorem]{Lemma}
\newtheorem{corollary}[theorem]{Corollary}
\newtheorem{proposition}[theorem]{Proposition}
\newtheorem{definition}[theorem]{Definition}
\DeclareMathOperator{\Det}{Det} \DeclareMathOperator{\Tr}{Tr}
\DeclareMathOperator{\Ad}{Ad}
 \DeclareMathOperator{\Rea}{Re}
\DeclareMathOperator{\Diag}{Diag}
\title [Complex Weyl correspondence...]{ Complex Weyl correspondence
for a generalized diamond group}
\author {Benjamin Cahen}
\address{Universit\'e de Lorraine, Site de Metz, UFR-MIM,
D\'epartement de math\'ematiques,
B\^atiment A,
3 rue Augustin Fresnel, BP 45112,
57073 METZ Cedex 03, France.}
\email{benjamin.cahen@univ-lorraine.fr}
\subjclass[2000]{22E45; 22E70; 81R05; 81S10; 81R30.} \keywords{Complex Weyl calculus;
Weyl correspondence; Fock space; Bargmann-Fock representation; Berezin quantization; Heisenberg group; diamond group;
reproducing kernel Hilbert space.}
\begin{document}

\maketitle

\begin{abstract} The generalized diamond group is the semi-direct product $G$ of the abelian
group ${\mathbb R}^m$ by the $(2n+1)$-dimensional
Heisenberg group $H_n$. We construct the generic representations of $G$ on the Fock space by extending those of $H_n$. Then we study the Berezin correspondence and the complex Weyl correspondence in connection with a generic representation $\pi$ of $G$, proving in particular that these correspondences are covariant with respect to $\pi$. We give also some explicit formulas for the Berezin symbols and the complex Weyl symbols of the representation operators $\pi(g)$ for $g\in G$. These results are applied to recover various formulas involving the Moyal product. Moreover, we relate $\pi$ to a coadjoint orbit of $G$ in the spirit of the Kirillov-Kostant method of orbits. This allows us to establish that the complex Weyl correspondence is a Stratonovich-Weyl correspondence for $\pi$. 
\end{abstract}

\vspace{1cm}

\section {Introduction} \label{sec:intro}
The notion of Stratonovich-Weyl correspondence was
introduced in \cite{St} in order to quantize homogeneous spaces as, for instance,  coadjoint orbits of Lie groups. 
Stratonovich-Weyl correspondences were systematically studied  by J.M.
Gracia-Bond\`{i}a, J.C. V\`{a}rilly and various collaborators, see
\cite{GB, GVS} and references therein.  

\begin{definition} \label{def1} \rm{\cite{GB}} Let $G$ be a Lie group and $\pi$ a unitary representation
of $G$ on a Hilbert space $\mathcal H$. Let $M$ be a homogeneous
$G$-space and let $\mu$ be a (suitably normalized) $G$-invariant
measure on $M$. Then a Stratonovich-Weyl correspondence for the
triple $(G,\pi, M)$ is an isomorphism $W$ from a vector space of
operators on $\mathcal H$ to a space of (generalized) functions on
$M$  satisfying the following properties:

\begin{enumerate}

\item $W$ maps the identity operator of $\mathcal H$ to the constant function $1$;

\item the function $W(A^{\ast})$ is the complex-conjugate of $W(A)$;

\item Covariance: we have $W(\pi (g)\,A\,\pi (g)^{-1})(x)=W(A)(g^{-1}\cdot x)$;

\item Traciality: we have
\begin{equation}\int_M\,W(A)(x)W(B)(x)\,d\mu (x)=\Tr(AB). \nonumber
\end{equation}

\end{enumerate} \end{definition}

Let us consider the case when $G$ is a quasi-Hermitian Lie group and $\pi$ is a unitary representation of $G$ which is realized in a reproducing kernel Hilbert space $\mathcal H$ consisting of holomorphic functions on a complex domain \cite[Chapter XII]{Ne}. In this case,
the Berezin correspondence, introduced by F. A. Berezin in the 1970's in order 
to develop a program of quantization by deformation for complex domains \cite{Be1, Be2}, is covariant with respect to $\pi$. Moreover, the Berezin correspondence
is an isomorphism
from the Hilbert space of all Hilbert-Schmidt operators on $\mathcal
H$ (equipped with the Hilbert-Schmidt norm) onto a space of
square-integrable functions on a complex domain and the
isometric part $W$ in the polar decomposition of $S$ is a Stratonovich-Weyl correspondence,
see \cite{CaPad}. 

It should be noticed that, in general, one can't give an explicit formula for $W$ which allows
the computation of $W(A)$ for certain operators $A$ on $\mathcal H$. However, in some cases
of interest,  $\mathcal H$ is the Fock space and $W$ reduces to the complex Weyl correspondence which can be defined by an integral formula \cite{CaTo, CaTs}. This occurs in particular for the unitary representations of the Heisenberg group \cite{CaTo}, of the Heisenberg motion groups \cite{CaTo}, of the diamond group \cite{CaTs} and for the metaplectic representation \cite{CaComp, CaExt}.

The diamond group is the semi-direct product of the Heisenberg group by the real line.
The diamond group is one of the simplest examples of solvable non-exponential Lie groups, so 
it is used to test different methods and conjectures as, for instance, the construction of unitary representations from polarizations on coadjoint orbits \cite{Bern, Kir, Strea}, the continuity
of the Kirillov map \cite{Lud} and the separation of unitary representations by means of the moment map \cite{Abd}.

In \cite{CaTs} we proved that the complex Weyl correspondence is a Stratonovich-Weyl correspondence for the generic representations of the diamond group and we give closed formulas for the complex Weyl symbols of the representation operators (see also \cite{CaRiv}).  

The main goal of the present paper is to extend the results of \cite{CaRiv,CaTs} to the case of the generalized diamond group, which is more delicate. Let us briefly detail
below the content of the paper. 

We first review some generalities about the generic representations of the Heisenberg group $H_n$ on the Fock space, the Berezin correspondence and the complex Weyl correspondence (Sections \ref{sec:2} and \ref{sec:3}).

The generic representations of the generalized diamond group $G$ are then constructed by extending the generic representations of $H_n$ to $G$. This is done by solving some functional equation involving the kernels of the representation operators in the spirit of \cite{CaComp}
(Section \ref{sec:4}).

We establish that the Berezin correspondence and the complex Weyl correspondence
are covariant with respect to a generic representation of $G$ (Section \ref{sec:5}).

We compute the Berezin symbol and the complex Weyl symbol of the representation operators
$\pi(g)$ for $g\in G$ and $d\pi(X)$ for $X$ in the Lie algebra of $G$ 
(Section \ref{sec:6}).

We use $W$ to connect $\pi$ to a coadjoint orbit of $G$ in the spirit of the Kirillov-Kostant 
of orbits; this allows us to interpret $W$ in terms of a Stratonovich-Weyl correspondence
(Section \ref{sec:7}).

We also develop a Schr\"odinger model $\pi'$ for $\pi$ by means of the Bargmann transform
(Section \ref{sec:8}). We then obtain a Melher-type formula \cite{CR1}. Moreover, we show that the classical Weyl correspondence gives a Stratonovich-Weyl correspondence
for $\pi'$.

Finally, we give some applications of the preceding results to computations of star products;
for instance, we recover a classical formula for the Moyal product of two Gaussians
and we compute the star exponential (for the Moyal product) of some polynomials (Section \ref{sec:9}).

\section{Generic representations of the Heisenberg group on the Fock space} \label{sec:2}
This section and the next section are mostly of expository nature.
First, we review some facts on the Bargmann-Fock
model for the generic 
representations (that is, the unitary irreducible non degenerate representations) of the Heisenberg group, the Berezin correspondence and the Weyl correspondence on the Fock space.
We follow closely \cite{CaComp}, see also \cite{CaTs} and \cite{CaExt}. Our main references
for the Heisenberg group and its unitary irreducible representations are \cite{Fo, Kir, Tay};
for the Berezin calculus, \cite{Be1, Be2} and, for the Weyl correspondence and the 
Stratonovich-Weyl quantizer, \cite{AU, CR1, Gos, Fo,  GB, Ho1, Ner}.

For each $z,\,w \in {\mathbb C}^{n}$, let $zw:=\sum_{k=1}^nz_kw_k$. For each $z, z',w,w'\in {\mathbb C}^{n}$, let
\begin{equation*}\omega ((z,w),(z',w'))=\tfrac{i}{2}(zw'-z'w). \end{equation*}

Then the $(2n+1)$-dimensional real Heisenberg group is 
\begin{equation*}H_n:=\{(z,c)\,:\,z\in {\mathbb C}^n, c\in {\mathbb R}\}\end{equation*} 
equipped with the multiplication law
\begin{equation*}(z,c)\cdot (z',c')=(z+z',c+c'+\tfrac{1}{2}\omega ((z,{\bar z}),(z',{\bar
z'}))).\end{equation*}

Let $\lambda>0$. By the Stone-von Neumann
theorem, there exists a unique (up to unitary equivalence) unitary
irreducible representation $\rho_{\lambda}$ of $H_n$ whose restriction to the center
of $H_n$ is the character $(0,c)\rightarrow e^{i\lambda c}$
\cite{Kir, Tay}. 
The Bargmann-Fock realization of $\rho_{\lambda}$ is defined as follows \cite{Barg}. 

Let ${\mathcal F}_{\lambda}$ be the Hilbert space of all holomorphic functions $f$
on ${\mathbb C}^n$ such that \begin{equation*}\Vert f\Vert^2_{\lambda}
:=\int_{{\mathbb C}^n} \vert f(z)\vert^2\, e^{-\lambda \vert
z\vert^2/2}\,d\mu_{\lambda} (z) <+\infty\end{equation*} where
 $d\mu_{\lambda}(z):=(2\pi
)^{-n}{\lambda}^n\,dm(z)$. Here $z=x+iy$ with $x$ and $y$ in ${\mathbb
R}^n$ and $dm(z):= dx\,dy$ denotes the standard Lebesgue measure on ${\mathbb
C}^n$.

Then we have
\begin{equation*}({\rho}_{\lambda}(h)f)(z)=\exp \left(i\lambda c_0+\tfrac{\lambda}{2}{\bar z_0}z-\tfrac{\lambda}{4}\vert z_0\vert^2\right)\,f(z- z_0) \end{equation*}
for each $h=(z_0, c_0)\in H_n$ and $z\in {\mathbb C}^n$.

For each $z\in {\mathbb C}^n$, consider the \textit {coherent state}
$e_z(w)=\exp (\lambda{\bar z}w/2)$. Then we have the reproducing property
$f(z)=\langle f,e_z\rangle_{{\mathcal F}_{\lambda}}$ for each $f\in {\mathcal
F}_{\lambda}$.

Let us introduce the Berezin
calculus on ${\mathcal F}_{\lambda}$ \cite{Be1, Be2, CaRiv}.
The Berezin (covariant) symbol of an operator $A$ on ${\mathcal
F}_{\lambda}$ is the
function $S_{\lambda}(A)$ defined on ${\mathbb C}^n$ by
\begin{equation*}S_{\lambda}(A)(z):=\frac {\langle A\,e_z\,,\,e_z\rangle_{{\mathcal F}_{\lambda}}}{\langle e_z\,,\,e_z\rangle_{{\mathcal F}_{\lambda}}} \end{equation*}
and the double Berezin symbol $s_{\lambda}$ is defined by
\begin{equation*}s_{\lambda}(A)(z,w):=\frac {\langle A\,e_w\,,\,e_z\rangle_{{\mathcal F}_{\lambda}}}{\langle e_w\,,\,e_z\rangle_{{\mathcal F}_{\lambda}}} \end{equation*}
for each $(z,w)\in {\mathbb C}^n \times {\mathbb C}^n$ such that 
$\langle e_w\,,\,e_z\rangle_{{\mathcal F}_{\lambda}}\not= 0$.

Since $s_{\lambda}(A)(z,w)$ is holomorphic in the variable $z$ and anti-holomorphic in the variable $w$, $s_{\lambda}(A)$ is determined by its restriction to the diagonal of ${\mathbb C}^n \times {\mathbb C}^n$, that is, by $S_{\lambda}(A)$. Moreover, the operator $A$ can be recovered from $s_{\lambda}(A)$ as follows. We have

\begin{align*}
A\,f(z)&=\langle A\,f\,,\,e_z \rangle_{{\mathcal F}_{\lambda}}  
  =\langle f\,,\,A^{\ast}\,e_z\rangle_{{\mathcal F}_{\lambda}}  \\
&=\int _{{\mathbb C}^n}\,f(w)\overline {A^{\ast}\,e_z(w)}\,e^{-\lambda \vert
w\vert^2/2}\,d\mu_{\lambda} (w) \\
&=\int _{{\mathbb C}^n}\,f(w)\overline {\langle A^{\ast}\,e_z,e_w\rangle_{{\mathcal F}_{\lambda}}}\,e^{-\lambda \vert
w\vert^2/2}\,d\mu_{\lambda} (w) \\
&=\int _{{\mathbb C}^n} \,f(w)\,s_{\lambda}(A)(z,w)\langle e_w,e_z\rangle_{{\mathcal F}_{\lambda}}\,e^{-\lambda \vert
w\vert^2/2}\,d\mu_{\lambda}(w).\\ 
\end{align*}
This shows that  the map $A\rightarrow S_{\lambda}(A)$ is injective and that the
kernel of $A$ is
\begin{equation} k_A(z,w)=\langle Ae_w,e_z\rangle_{{\mathcal F}_{\lambda}}=s_{\lambda}(A)(z,w)\langle e_w,e_z\rangle_{{\mathcal F}_{\lambda}}.\end{equation}

The map $S_{\lambda}$ is a bounded operator from the
space ${\mathcal L}_2({\mathcal F}_{\lambda})$ of all Hilbert-Schmidt operators
on ${\mathcal F}_{\lambda}$ (endowed with the
Hilbert-Schmidt norm) to $L^2({\mathbb C}^n,\mu_{\lambda})$  which
is one-to-one and has dense range \cite{UU}. 

Now, we introduce 
the complex Weyl correspondence starting from a \textit{Stratonovich-Weyl quantizer}
see \cite{CaTo, GB, St} and \cite[Example 2.2 and Example 4.2]{AU}.

Let $R_0$ be the parity operator on ${\mathcal F}_{\lambda}$ defined by
\begin{equation*}(R_0f)(z)=2^nf(-z). \end{equation*}

Then we define the Stratonovich-Weyl quantizer $\Omega_0$ by
\begin{equation*}\Omega_0(z):=\rho_{\lambda}(z,0)R_0\rho_{\lambda}(z,0)^{-1} \end{equation*} for each $z\in {\mathbb C}^n$. 
Thus we get
\begin{equation}\label{eq:om}(\Omega_0(z)f)(w)=2^n\exp \left({\lambda}(w{\bar z}-\vert z\vert^2)\right) f(2z-w) \end{equation}
for each $z, w\in {\mathbb C}^n$ and $f\in {\mathcal F}_{\lambda}$. 

For each trace class operator $A$ on ${\mathcal F}_{\lambda}$, we define 
\begin{equation*} W_0(A)(z):=\Tr(A\Omega_0(z))\end{equation*}
for each $z \in {\mathbb C}^n$.
We have the following proposition, see \cite{AU, CaTo, CaTs}.

\begin{proposition} \label{prop:intW} For each trace class operator $A$ on ${\mathcal F}_{\lambda}$
and each $z \in {\mathbb C}^n$, we have
\begin{equation} \label{eq:intWsym}
W_0(A)(z)=2^n\int_{{\mathbb C}^n}k_A(z+w,z-w)\exp \left(\tfrac{\lambda}{2}\left(-z{\bar z}-w{\bar w}+z{\bar w}-{\bar z}w\right)\right) d\mu_{\lambda}(w).\end{equation}
\end{proposition}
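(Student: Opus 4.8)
The plan is to reduce the trace $W_0(A)(z)=\Tr(A\Omega_0(z))$ to a single Gaussian integral by exploiting the action of the quantizer $\Omega_0(z)$ on the coherent states, together with the resolution of the identity they furnish. The starting point is the coherent-state trace formula: the reproducing property $f(u)=\langle f,e_u\rangle_{\mathcal F_{\lambda}}$ gives, for $f,g\in\mathcal F_{\lambda}$, the identity $\int_{{\mathbb C}^n}\langle f,e_u\rangle_{\mathcal F_{\lambda}}\langle e_u,g\rangle_{\mathcal F_{\lambda}}\,e^{-\lambda|u|^2/2}\,d\mu_{\lambda}(u)=\langle f,g\rangle_{\mathcal F_{\lambda}}$, so that the rank-one operators $e^{-\lambda|u|^2/2}\,|e_u\rangle\langle e_u|$ integrate to the identity of $\mathcal F_{\lambda}$. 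Pairing this against a trace class operator $B$ yields $\Tr(B)=\int_{{\mathbb C}^n}\langle Be_u,e_u\rangle_{\mathcal F_{\lambda}}\,e^{-\lambda|u|^2/2}\,d\mu_{\lambda}(u)$. Since $A$ is trace class and $\Omega_0(z)$ is bounded (it is, up to the factor $2^n$, a unitary conjugate of the parity operator), the operator $B=A\Omega_0(z)$ is trace class and this formula applies.

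The key computation is the action of $\Omega_0(z)$ on a single coherent state. Inserting $e_u(w)=\exp(\lambda\bar u w/2)$ into the explicit formula \eqref{eq:om}, I would check that $\Omega_0(z)e_u=2^n\exp(-\lambda|z|^2+\lambda\bar u z)\,e_{2z-u}$; the essential observation is that the factor $\exp(\tfrac{\lambda}{2}(2\bar z-\bar u)w)$ produced by \eqref{eq:om} is precisely the coherent state $e_{2z-u}$. Consequently $\langle A\Omega_0(z)e_u,e_u\rangle_{\mathcal F_{\lambda}}=2^n\exp(-\lambda|z|^2+\lambda\bar u z)\,k_A(u,2z-u)$, using $k_A(u,2z-u)=\langle Ae_{2z-u},e_u\rangle_{\mathcal F_{\lambda}}$ from (2.1). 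The trace formula then collapses to the single integral $W_0(A)(z)=2^n\int_{{\mathbb C}^n}k_A(u,2z-u)\exp(-\lambda|z|^2+\lambda\bar u z)\,e^{-\lambda|u|^2/2}\,d\mu_{\lambda}(u)$.

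Finally I would perform the affine change of variables $u=z+w$, under which $2z-u=z-w$ and $d\mu_{\lambda}(u)=d\mu_{\lambda}(w)$ by translation invariance of Lebesgue measure. Expanding $|u|^2=|z|^2+z\bar w+\bar z w+|w|^2$ and $\bar u z=|z|^2+z\bar w$, the combined exponent $-\lambda|z|^2+\lambda\bar u z-\tfrac{\lambda}{2}|u|^2$ simplifies to $\tfrac{\lambda}{2}(-z\bar z-w\bar w+z\bar w-\bar z w)$, which is exactly the weight appearing in \eqref{eq:intWsym}, completing the proof.

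The computation itself is routine once the coherent-state action is in hand, so the hard part will not be any single algebraic step but rather rigor in two places. First, the interchange of trace and integral must be justified for the trace class operator $A\Omega_0(z)$, which is where the resolution of the identity and the boundedness of $\Omega_0(z)$ are genuinely used. Second, the bookkeeping of the Gaussian exponent under the substitution $u=z+w$ must be carried out carefully, since a stray sign or factor of $\tfrac12$ would destroy the antisymmetric cross-term structure $z\bar w-\bar z w$ that is the whole point of the final formula.
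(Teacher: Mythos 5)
Your proof is correct, and all the key steps check out: the coherent-state trace formula $\Tr(B)=\int_{{\mathbb C}^n}\langle Be_u,e_u\rangle_{{\mathcal F}_{\lambda}}\,e^{-\lambda|u|^2/2}\,d\mu_{\lambda}(u)$ is valid for trace class $B$ (and the absolute convergence needed for Fubini follows from the singular value decomposition of $A$, since $\Omega_0(z)$ has norm $2^n$), the identity $\Omega_0(z)e_u=2^n\exp(-\lambda|z|^2+\lambda\bar u z)\,e_{2z-u}$ follows directly from Equation \eqref{eq:om}, and the exponent bookkeeping under $u=z+w$ is accurate, yielding exactly $\tfrac{\lambda}{2}(-z\bar z-w\bar w+z\bar w-\bar z w)$. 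Note that the paper itself gives no proof of this proposition, simply citing \cite{AU, CaTo, CaTs}; your argument via the resolution of the identity and the action of the quantizer on coherent states is the standard route taken in that literature, so you have correctly reconstructed the omitted proof rather than diverged from it.
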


This integral formula allows to extend $W_0$ to operators
on ${\mathcal F}_{\lambda}$  which are not necessarily trace class, for instance Hilbert-Schmidt operators. In particular, it is known that $W_0:{\mathcal L}_2({\mathcal F}_{\lambda})\rightarrow L^2({\mathbb C}^n,\mu_{\lambda})$ is the unitary part in the polar decomposition of $S_{\lambda}$ \cite{ CaRiv, CaTs}.

On the other hand, we can also consider the case of
the differential operators on ${\mathcal F}_{\lambda}$ with polynomial coefficients.

Here we use the standard multi-index notation. If $p=(p_1,p_2,\ldots,p_n)\in {\mathbb N}^n$, we set $z^{p}=z_1^{p_1}z_2^{p_2}\dots z_n^{p_n}$, $\vert p\vert=p_1+
p_2+\dots+p_n$, $p!=p_1!p_2!\dots p_n!$. Also, we say that $p\leq q$
if $p_k\leq q_k$ for each $k=1,2,\ldots, n$ and, in this case, we denote
$\tbinom{q}{p}=\tfrac{q!}{p!(q-p)!}$.

\begin{proposition}\label{prop:Wdiff} \rm{ \cite{CaTo}}
For each $p,q \in {\mathbb N}^n$, let $A_{pq}:=z^p(\frac{\partial}{\partial z})^q$. Then the integral in Equation \ref{eq:intWsym} is convergent and we have 
\begin{equation*}W_0(A_{pq})(z)=2^{-\vert q\vert} \sum_{k\leq p,q}(-1)^{\vert k \vert}\frac{p!\,q!}{k!(p-k)!(q-k)!}\lambda^{\vert q \vert-\vert k\vert}z^{p-k}{\bar z}^{q-k}.\end{equation*}
\end{proposition}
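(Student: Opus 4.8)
The plan is to read everything off the integral formula \eqref{eq:intWsym}, so the first task is to find the kernel $k_{A_{pq}}$. Applying $A_{pq}=z^p(\partial/\partial z)^q$ to the coherent state $e_w$ and using that each $\partial/\partial z_k$ acting on $e_w(z)=\exp(\lambda\bar w z/2)$ pulls down a factor $\lambda\bar w_k/2$, I would obtain $(A_{pq}e_w)(z)=(\lambda/2)^{\vert q\vert}z^p\bar w^q\exp(\lambda\bar w z/2)$. By the reproducing property $\langle f,e_z\rangle_{{\mathcal F}_{\lambda}}=f(z)$ this gives
$k_{A_{pq}}(z,w)=\langle A_{pq}e_w,e_z\rangle_{{\mathcal F}_{\lambda}}=(\lambda/2)^{\vert q\vert}z^p\bar w^q\exp(\lambda\bar w z/2)$.

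Next I would substitute $k_{A_{pq}}(z+w,z-w)$ into \eqref{eq:intWsym}. The crucial step — and the one I would carry out most carefully — is combining the two exponential factors. The exponential coming from the kernel is $\exp(\tfrac{\lambda}{2}(\bar z-\bar w)(z+w))$ and the Gaussian weight in \eqref{eq:intWsym} is $\exp(\tfrac{\lambda}{2}(-z\bar z-w\bar w+z\bar w-\bar z w))$; expanding the products and using $z\bar w=\bar w z$ and $\bar z w=w\bar z$, all the terms in $z$ alone and all the mixed terms cancel, leaving only $\exp(-\lambda\vert w\vert^2)$. Hence the integrand becomes $2^n(\lambda/2)^{\vert q\vert}(z+w)^p(\bar z-\bar w)^q e^{-\lambda\vert w\vert^2}$ against $d\mu_{\lambda}(w)$. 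Since this is a polynomial in $w,\bar w$ times a genuine Gaussian, absolute convergence of the integral is immediate, which disposes of the convergence assertion.

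Finally I would expand $(z+w)^p$ and $(\bar z-\bar w)^q$ by the multinomial formula and integrate term by term. Because the integral factors over the coordinates, only the one-variable moments $\int_{\mathbb C} w^i\bar w^j e^{-\lambda\vert w\vert^2}\,d\mu_{\lambda}(w)$ are needed; passing to polar coordinates, the angular integration forces $i=j$ and the radial integration yields $i!/(2\lambda^i)$, so in ${\mathbb C}^n$ the moment equals $i!\,2^{-n}\lambda^{-\vert i\vert}$ when $i=j$ and $0$ otherwise. Retaining only the diagonal terms $i=j=k$ with $k\le p,q$, the prefactor $2^n$ cancels the $2^{-n}$, the powers of $\lambda$ combine to $\lambda^{\vert q\vert-\vert k\vert}$, the powers of $2$ to $2^{-\vert q\vert}$, the sign becomes $(-1)^{\vert k\vert}$, and the combinatorial coefficient $\binom{p}{k}\binom{q}{k}k!$ collapses to $p!\,q!/(k!(p-k)!(q-k)!)$, which is exactly the stated formula. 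The computation is essentially mechanical; the only genuine point requiring attention is the exponent cancellation producing the clean Gaussian $e^{-\lambda\vert w\vert^2}$, everything else being bookkeeping.
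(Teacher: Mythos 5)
Your computation is correct: the kernel $k_{A_{pq}}(z,w)=(\lambda/2)^{\vert q\vert}z^p\bar w^q e^{\lambda\bar w z/2}$, the exact cancellation of the two exponents down to $e^{-\lambda\vert w\vert^2}$ (which also settles convergence), and the diagonal Gaussian moments $\int_{\mathbb C}w^i\bar w^j e^{-\lambda\vert w\vert^2}\,\tfrac{\lambda}{2\pi}\,dm(w)=\delta_{ij}\,i!\,(2\lambda^i)^{-1}$ all check and combine, via $\binom{p}{k}\binom{q}{k}k!=p!\,q!/(k!(p-k)!(q-k)!)$, to exactly the stated formula. Note that this paper gives no proof of the proposition --- it is quoted from \cite{CaTo} --- so there is nothing internal to compare against; your direct kernel-plus-moments computation is the natural argument and stands on its own.
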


\section{The Schr\"odinger model for the generic representations of $H_n$} \label{sec:3}
Here, in order to connect $W_0$ to
the classical Weyl correspondence, we consider
another realization of the unitary irreducible representation of $H_n$ with central character
$(0,c)\rightarrow e^{i\lambda c}$, namely the Schr\"odinger representation $\rho'_{\lambda}$
defined on $L^2({\mathbb R}^n)$ by
\begin{equation*}(\rho'_{\lambda}(a+ib,c)\phi)(x)
=\exp \left(i\lambda (c-bx+\tfrac{1}{2}ab)\right)\,\phi(x-a) \end{equation*}
for each $a, b, x \in {\mathbb R}^n$.

In the setting of the method of orbits \cite{Kir}, $\rho'_{\lambda}$ can be obtained by using a real polarization of a coadjoint orbit of $H_n$ whereas $\rho_{\lambda}$ is obtained from a complex polarization of the same coadjoint orbit \cite{Bern, Kir}.

An (unitary) intertwining operator between $\rho_{\lambda}$ and $\rho'_{\lambda}$ is the Bargmann
transform $\mathcal B: L^2({\mathbb R}^n)\rightarrow {\mathcal F}_{\lambda}$  defined by
\begin{equation*}({\mathcal B}f)(z)=\left(\tfrac{\lambda}{\pi}\right)^{n/4}\,\int_{{\mathbb R}^n}\,\exp
\left(-\tfrac{\lambda}{4}z^2+\lambda zx-\tfrac{\lambda}{2}x^2\right)
\,\phi(x)\,dx,\end{equation*} 
see \cite{CaRiv, Fo}.

We can imitate the construction of $W_0$ from $\Omega_0$ given in 
Section \ref{sec:2}.
Let $R_1$ be the parity operator on $ L^2({\mathbb R}^n)$ defined by
\begin{equation*}(R_1\phi)(x)=2^n\phi(-x), \end{equation*}
Consider the Stratonovich-Weyl quantizer $\Omega_1$ on ${\mathbb R}^{2n}$ defined by
\begin{equation*}\Omega_1(a,b):=\rho'_{\lambda}(a+ib,0)R_1\rho'_{\lambda}(a+ib,0)^{-1}. \end{equation*} 
By an elementary computation, we get
\begin{equation}(\Omega_1(a,b)\phi)(x)=2^n\exp \left(2i{\lambda}b(a-x)\right) \phi(2a-x) \end{equation}
for each $\phi \in L^2({\mathbb R}^n)$.

For each trace class operator $A$ on $L^2({\mathbb R}^n)$, we define the function
$W_1(A)$ on ${\mathbb R}^{2n}$ by 
\begin{equation*} W_1(A)(x,y):=\Tr(A\Omega_1(x,y))\end{equation*}
for each $x, y \in {\mathbb R}^n$.

Observing that since $\mathcal B$ also intertwines $R_0$ and $R_1$ (that is, we have $\mathcal B R_1=R_0\mathcal B$), we can easily verify that, for each trace class operator $A$ on $L^2({\mathbb R}^{n})$ and each
$a, b\in {\mathbb R}^{n}$, we have
\begin{equation*}W_1(A)(a,b)=W_0({\mathcal B}A{\mathcal B}^{-1})(a+ib).\end{equation*}
This relation can be extended to operators which are not
necessarily of trace class.

Now, let us indicate the connection between $W_1$ and the classical Weyl correspondence 
${\mathcal W}$ on ${\mathbb R}^{2n}$ which can be defined as follows, see \cite{Fo, Ho1}.
For each function $f$ in the Schwartz space ${\mathcal S}({\mathbb
R}^{2n})$, we define the operator $ {\mathcal W}(f)$ acting on the Hilbert
space $L^2({\mathbb R}^{n})$ by \begin{equation}\label{eqcalWeyl} ({\mathcal W}(f)\phi)
(x)={(2\pi)}^{-n}\,\int_{{\mathbb R}^{2n}}\, e^{i yt} f( x+\tfrac{1}{2}y,
t)\,\phi (x+y)\,dy\,dt.
\end{equation}

In \cite{CaJLT, CaComp}, we proved that, for each $f\in {\mathcal S}({\mathbb R}^{2n})$,
we have 
\begin{equation*}W_1({\mathcal W}(f))(x,y)=f(x,\lambda y),
\end{equation*}
for each $x,y \in {\mathbb R}^n$.

\section{Generic representations of the generalized diamond group} \label{sec:4}
The generic representations of the diamond group can be obtained as holomorphically induced
representations by using the method of orbits \cite{Bern, Kir, Lud, Strea} or by using the general method of \cite{Ne}, see \cite{CaPad}. Here, we will construct the
generic representations of the generalized diamond group by extending those of $H_n$.
This rather elementary method is inspired by considerations on the metaplectic representation, see \cite{CaComp}.

Let $m$ be a positive integer. Let $\alpha_1,\alpha_2,\ldots,\alpha_n$ be $n$ linear forms on ${\mathbb R}^m$. We consider the action of ${\mathbb R}^m$ on ${\mathbb C}^n$ defined by
\begin{equation*}t\cdot z=t\cdot (z_1,z_2,\ldots,z_n):=(e^{i\alpha_1(t)}z_1,
e^{i\alpha_2(t)}z_2,\ldots,e^{i\alpha_n(t)}z_n). \end{equation*}

For $t\in {\mathbb R}^m$, we denote in general $t^{-1}$ instead of $-t$. Indeed, the notation $t^{-1}\cdot z$ seems to be more relevant than the notation
$(-t)\cdot z$.

The generalized diamond group is ${\mathbb R}^m\times {\mathbb C}^n \times 
{\mathbb R}$ with the multiplication
\begin{equation*}(t,z,c)\cdot (t',z',c')=(t+t', t'\cdot z+z',c+c'+\tfrac{1}{2}\omega ((z, {\bar z}),(t\cdot z', \overline{t\cdot z'})).\end{equation*}

Note that $H_n$ can be identified with the subgroup of $G$ consisting of the elements
of the form $(0,z,c)$ with $z\in {\mathbb C}^n$ and $c\in {\mathbb R}$.

Note also that the action of ${\mathbb R}^m$ on ${\mathbb C}^n$ gives an action of 
${\mathbb R}^m$ on $H_n$ defined by
\begin{equation*} t\cdot (z,c):=(t\cdot z,c), \quad t\in {\mathbb R}^m, z\in {\mathbb C}^n,
c\in {\mathbb R}.\end{equation*}
Then we see that $G$ is the semi-direct product ${\mathbb R}^m \rtimes H_n$ with respect to this action.

Now we construct the generic representations of $G$ from those of $H_n$.

\begin{proposition} \label{proppi} Let $\lambda>0$. For each $t\in {\mathbb R}^m$, let $\sigma(t)$ be an operator on ${\mathcal F}_{\lambda}$. Then the equation
\begin{equation*}\pi(t,h)=\rho_{\lambda}(h)\sigma(t)\quad t\in {\mathbb R}^m, \,h\in H_n \end{equation*} defined a unitary representation of $G$ on ${\mathcal F}_{\lambda}$ if and only if there exists a unitary character $\chi$ on ${\mathbb R}^m$ such that
\begin{equation*}(\sigma(t)f)(z)=\chi(t)f(t^{-1}\cdot z)\end{equation*} for each
$t\in {\mathbb R}^m$, $f\in {\mathcal F}_{\lambda}$ and $z\in {\mathbb C}^n$.

In this case, we have
\begin{equation*}({\pi}(t,z_0,c_0)f)(z)=\chi(t)\exp \left(i\lambda c_0+\tfrac{\lambda}{2}{\bar z_0}z-\tfrac{\lambda}{4}\vert z_0\vert^2\right)\,f(t^{-1}\cdot (z- z_0)) \end{equation*}
for each $t\in {\mathbb R}^m$,  $z, z_0\in {\mathbb C}^n$ and $c_0\in {\mathbb R}$.
\end{proposition}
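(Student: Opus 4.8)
The plan is to prove both directions of the equivalence, then read off the explicit formula for $\pi$.

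For the \emph{sufficiency} direction, I would start from the candidate formula $(\sigma(t)f)(z)=\chi(t)f(t^{-1}\cdot z)$ and verify directly that $\pi(t,h)=\rho_\lambda(h)\sigma(t)$ defines a representation. The key structural fact is that $G$ is the semidirect product ${\mathbb R}^m\rtimes H_n$, so the defining relation $\pi(t,h)=\rho_\lambda(h)\sigma(t)$ is forced to satisfy a compatibility with the product law $(t,h)\cdot(t',h')=(t+t',\,(t\cdot h')\,h\ )$ — more precisely the group law given in the excerpt. Writing the representation property $\pi(g)\pi(g')=\pi(gg')$ out in terms of $\rho_\lambda$ and $\sigma$, one sees it is equivalent to the single \emph{intertwining identity}
\begin{equation}\label{eq:intertwine}
\sigma(t)\,\rho_\lambda(h)\,\sigma(t)^{-1}=\rho_\lambda(t\cdot h),\qquad t\in{\mathbb R}^m,\ h\in H_n,
\end{equation}
together with $\sigma$ being a homomorphism of ${\mathbb R}^m$. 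I would check \eqref{eq:intertwine} by a direct substitution using the explicit Bargmann--Fock formula for $\rho_\lambda(z_0,c_0)$ from Section~\ref{sec:2}: applying $\sigma(t)^{-1}$, then $\rho_\lambda(z_0,c_0)$, then $\sigma(t)$ to a function $f$, and tracking how the rotation $t\cdot z=(e^{i\alpha_1(t)}z_1,\dots)$ passes through the exponential factor $\exp(\tfrac\lambda2\bar z_0 z-\tfrac\lambda4|z_0|^2)$. The point is that $|t\cdot z|=|z|$ and $\overline{t\cdot z_0}\,(t\cdot z)=\bar z_0 z$, so the automorphism $z\mapsto t\cdot z$ preserves the Hermitian pairing, which is exactly what makes \eqref{eq:intertwine} hold and simultaneously shows each $\sigma(t)$ is unitary on ${\mathcal F}_\lambda$.

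For the \emph{necessity} direction, I would assume $\pi(t,h)=\rho_\lambda(h)\sigma(t)$ is a unitary representation and deduce the form of $\sigma$. Restricting to $t=0$ recovers $\rho_\lambda$, and the representation property again forces both that $\sigma:{\mathbb R}^m\to U({\mathcal F}_\lambda)$ is a homomorphism and that the intertwining relation \eqref{eq:intertwine} holds. From \eqref{eq:intertwine} I would argue that $\sigma(t)$ intertwines $\rho_\lambda$ with its composition $\rho_\lambda\circ(t\cdot)$; since $\rho_\lambda$ is irreducible and $z\mapsto t\cdot z$ is a symplectic automorphism fixing the center, Schur's lemma (or a direct check on coherent states) pins down $\sigma(t)$ up to a scalar as the geometric operator $f\mapsto f(t^{-1}\cdot z)$. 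The homomorphism property then forces the scalars to assemble into a unitary character $\chi$ of ${\mathbb R}^m$, giving $(\sigma(t)f)(z)=\chi(t)f(t^{-1}\cdot z)$.

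The main obstacle I anticipate is the necessity direction: extracting the precise scalar ambiguity and verifying it is genuinely a character requires care, because one must rule out more exotic solutions of the functional equation \eqref{eq:intertwine} and confirm that $f\mapsto f(t^{-1}\cdot z)$ actually maps ${\mathcal F}_\lambda$ into itself boundedly. The cleanest route is probably to test \eqref{eq:intertwine} on the coherent states $e_z$, using the reproducing property $f(z)=\langle f,e_z\rangle_{{\mathcal F}_\lambda}$ and the fact that $\rho_\lambda(z_0,0)e_0$ ranges over a total family of coherent states; this converts the operator identity into a scalar functional equation in $z,z_0,t$ that can be solved explicitly. Once $\sigma$ is determined, the final displayed formula for $\pi(t,z_0,c_0)$ follows by simply composing $\rho_\lambda(z_0,c_0)$ with $\sigma(t)$ and substituting the Bargmann--Fock expression, a routine computation.
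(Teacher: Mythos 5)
Your strategy is correct, but the route you take through the necessity direction is genuinely different from the paper's. Both you and the paper reduce the representation property to the intertwining relation $\sigma(t)\rho_{\lambda}(h)\sigma(t)^{-1}=\rho_{\lambda}(t\cdot h)$ (Equation \ref{eq:comm}) together with $\sigma(t+t')=\sigma(t)\sigma(t')$. From there you argue abstractly: $h\mapsto t\cdot h$ is an automorphism of $H_n$ fixing the center, the geometric operator $(U_tf)(z)=f(t^{-1}\cdot z)$ is an explicit unitary intertwiner between $\rho_{\lambda}$ and $\rho_{\lambda}\circ(t\,\cdot)$ (unitarity from $\vert t\cdot z\vert=\vert z\vert$ and rotation invariance of the Gaussian measure, intertwining from $\overline{t\cdot z_0}(t\cdot z)={\bar z_0}z$), so Schur's lemma applied to $U_t^{-1}\sigma(t)$ forces $\sigma(t)=\chi(t)U_t$ with $\vert\chi(t)\vert=1$, and the homomorphism property makes $\chi$ a unitary character. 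The paper instead proceeds in a self-contained, computational way: it writes $\sigma(t)$ as an integral operator with kernel $b_t(z,w)$ as in Equation \ref{eq:bt}, converts Equation \ref{eq:comm} into the functional equation \ref{eq:comm2} for $b_t$, sets $w=0$ and uses the fact that one side is holomorphic and the other anti-holomorphic in the relevant variable to conclude $b_t(z,w)=\chi(t)\exp\left(\tfrac{\lambda}{2}(\overline{t\cdot w})z\right)$, and then recovers $\sigma(t)f=\chi(t)\,f(t^{-1}\cdot\,\cdot\,)$ via the reproducing property. Your approach is shorter modulo standard facts (Stone--von Neumann, triviality of the commutant of an irreducible unitary representation) and more conceptual; the paper's approach never invokes irreducibility, is the ``elementary method'' the author advertises, and produces the kernel of $\sigma(t)$ explicitly, a technique the paper reuses in Sections \ref{sec:5} and \ref{sec:8}. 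Two small remarks: the boundedness worry you raise is only an issue for $U_t$ (settled by the change-of-variables computation), since $\sigma(t)=\pi(t,0,0)$ is unitary by hypothesis, so Schur's lemma applies without further care; and your fallback of testing the intertwining relation on coherent states is in substance the paper's kernel method, since $b_t(z,w)=\langle\sigma(t)e_w,e_z\rangle_{{\mathcal F}_{\lambda}}$. Like you, the paper treats the sufficiency direction and the final composed formula for $\pi(t,z_0,c_0)$ as routine verification.
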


\begin{proof} Assume that $\pi$ defined as above is a unitary representation of $G$ on ${\mathcal F}_{\lambda}$. Then we can write
\begin{equation*}\pi(t,h)\pi(t',h')=\pi((t,h)\cdot (t',h')),\quad t,t'\in  {\mathbb R}^m, \, h,h'\in H_n. \end{equation*} Thus we get
\begin{equation} \label{eq:comm} \rho_{\lambda}(t\cdot h)\sigma(t)=\sigma(t)\rho_{\lambda}( h), \quad 
t\in {\mathbb R}^m, \, h\in H_n. \end{equation}

Let us denote by $b_t(z,w)$ the kernel of $\sigma(t)$ for each $t\in {\mathbb R}^m$, that is, we have
\begin{equation}\label{eq:bt} (\sigma(t)f)(z)=\int_{{\mathbb C}^n}\,b_t(z,w)f(w)e^{-\lambda \vert w\vert^2/2}\,d\mu_{\lambda}(w)\end{equation}
for each $t\in {\mathbb R}^m$, $f\in {\mathcal F}_{\lambda}$ and $z\in {\mathbb C}^n$.

Let $h=(z_0,c_0)\in H_n$. Then, on the one hand, we have
\begin{align*}(\rho_{\lambda}&(t\cdot h)\sigma(t)f)(z)=
\exp \left(i\lambda c_0+\tfrac{\lambda}{2}z(\overline{t\cdot z_0})-\tfrac{\lambda}{4}\vert z_0\vert^2\right)\\
&\times \int_{{\mathbb C}^n}\,b_t(z-t\cdot z_0,w)f(w)e^{-\lambda \vert w\vert^2/2}\,d\mu_{\lambda}(w)\end{align*}
for each $t\in {\mathbb R}^m$, $f\in {\mathcal F}_{\lambda}$ and $z\in {\mathbb C}^n$.

On the other hand, we have
\begin{align*}(\sigma(t)\rho_{\lambda}(h)f)(z)=&
\int_{{\mathbb C}^n}\,b_t(z,w)\exp \left(i\lambda c_0+\tfrac{\lambda}{2}{\bar z_0}w-\tfrac{\lambda}{4}\vert z_0\vert^2\right)f(w-z_0)e^{-\lambda \vert w\vert^2/2}\,d\mu_{\lambda}(w)\\
=&\int_{{\mathbb C}^n}\,b_t(z,w+z_0)
\exp \left(i\lambda c_0-\tfrac{\lambda}{2}{\bar w}z_0-\tfrac{\lambda}{4}\vert z_0\vert^2\right)f(w)\,e^{-\lambda \vert w\vert^2/2}\,d\mu_{\lambda}(w),
\end{align*} 
for each $t\in {\mathbb R}^m$, $f\in {\mathcal F}_{\lambda}$ and $z\in {\mathbb C}^n$,
by the change  $w\rightarrow w+z_0$.

Then we can express Equation \ref{eq:comm} in terms of kernels as
\begin{equation}\label{eq:comm2}\exp \left(\tfrac{\lambda}{2}(\overline{t\cdot z_0})z \right)\,
b_t(z-t\cdot z_0,w)=\exp \left(-\tfrac{\lambda}{2}{\bar w} z_0 \right)\,b_t(z,w+z_0)
\end{equation} for each $t\in {\mathbb R}^m$ and each $z, z_0, w\in {\mathbb C}^n$.
Taking $w=0$ and then making the change $z_0\rightarrow w$ in Equation \ref{eq:comm2}, we get
\begin{equation*}b_t(z,w)\exp \left(-\tfrac{\lambda}{2}(\overline{t\cdot w})z \right)=b_t(z-t\cdot w,0)\end{equation*}
for each $z,w\in {\mathbb C}^n$. In this equation, the left-hand side is anti-holomorphic in the variable $w$ whereas the right-hand side is holomorphic in $w$. Consequently, for each 
$t\in {\mathbb R}^m$, there exists $\chi(t)\in {\mathbb C}$ such that 
\begin{equation*}b_t(z,w)=\chi(t)\exp \left(\tfrac{\lambda}{2}(\overline{t\cdot w})z \right)
\end{equation*} for each $z,w\in {\mathbb C}^n$. Replacing in Equation \ref{eq:bt}, we get,
for each $f\in \mathcal F_{\lambda}$, $t\in {\mathbb R}^m$ and $z\in {\mathbb C}^n$,
\begin{align*}(\sigma(t)&f)(z)=\chi(t)\, \int_{{\mathbb C}^n}\exp \left(\tfrac{\lambda}{2}(\overline{t\cdot w})z \right)\,f(w)e^{-\lambda \vert w\vert^2/2}\,d\mu_{\lambda}(w)\\
&=\chi(t)\langle f, e_{t^{-1}\cdot z}\rangle_{\mathcal F_{\lambda}}=\chi(t)f(t^{-1}\cdot z).
\end{align*}
Moreover, by writing $\pi(t+t',0,0)=\pi(t,0,0)\pi(t',0,0)$, we see that $\sigma(t+t')=\sigma(t)\sigma(t')$ hence $\chi(t+t')=\chi(t)\chi(t')$ for $t,t'\in {\mathbb R}^m$. This proves that $\chi$ is a character of ${\mathbb R}^m$. Finally, since $\pi(t,0,0)$ is a unitary operator, we have that $\chi$ is unitary.
\end{proof}

\section{Covariance of $S_{\lambda}$ and $W_0$} \label{sec:5}
Here we establish that $S_{\lambda}$ and $W_0$ are $G$-covariant with respect to $\pi$.
Covariance of $S_{\lambda}$ will follow from some identities about the action of $G$ on the coherent states $e_z$, $z\in {\mathbb C}^n$. Covariance of $W_0$ will be proved here by using the integral formula for $W_0$, see Proposition \ref{prop:intW}.

\begin{lemma}\label{lem:cs} Let $g=(t,h)\in G$ with $h=(z_0,c_0)\in H_n$. Then we have
\begin{equation}\label{eq:piez} \pi(g)e_z=\chi(t)\, \exp \left(i\lambda c_0-\tfrac{\lambda}{2}(t^{-1}\cdot z_0){\bar z}-\tfrac{\lambda}{4}\vert z_0\vert^2\right)\,e_{t\cdot z+z_0}\end{equation}
for each $z\in {\mathbb C}^n$. In particular, we have
\begin{equation}\label{eq:sigez} \sigma(t)e_z=\chi(t) \,e_{t\cdot z},\quad z\in {\mathbb C}^n \end{equation}
and
\begin{equation}\label{eq:rhoez} \rho_{\lambda}(h)e_z= \exp \left(i\lambda c_0-\tfrac{\lambda}{2}z_0{\bar z}-\tfrac{\lambda}{4}\vert z_0\vert^2\right)\,e_{z+z_0}\end{equation}
for each $z\in {\mathbb C}^n$.
\end{lemma}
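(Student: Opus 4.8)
The plan is to prove the master identity \eqref{eq:piez} by a single direct computation on the coherent states, and then to obtain \eqref{eq:sigez} and \eqref{eq:rhoez} for free as its two degenerate cases. Indeed, taking $z_0=0$ and $c_0=0$ gives $\pi(t,0,0)=\sigma(t)$ (since $\rho_{\lambda}(0,0)=\id$), and the right-hand side of \eqref{eq:piez} collapses to $\chi(t)\,e_{t\cdot z}$, which is \eqref{eq:sigez}; taking $t=0$ gives $\pi(0,z_0,c_0)=\rho_{\lambda}(z_0,c_0)$, and since then $\chi(t)=1$, $t\cdot z=z$ and $t^{-1}\cdot z_0=z_0$, the right-hand side of \eqref{eq:piez} becomes $\exp(i\lambda c_0-\tfrac{\lambda}{2}z_0{\bar z}-\tfrac{\lambda}{4}\vert z_0\vert^2)\,e_{z+z_0}$, which is \eqref{eq:rhoez}. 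So everything reduces to \eqref{eq:piez}.

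To prove \eqref{eq:piez}, fix $g=(t,z_0,c_0)$ and evaluate $\pi(g)e_z$ at a point $w\in{\mathbb C}^n$ using the explicit formula for $\pi$ from Proposition \ref{proppi} (with the input variable renamed $w$, the label $z$ of the coherent state being a parameter):
\[(\pi(g)e_z)(w)=\chi(t)\exp\left(i\lambda c_0+\tfrac{\lambda}{2}{\bar z_0}w-\tfrac{\lambda}{4}\vert z_0\vert^2\right)\,e_z\!\left(t^{-1}\cdot(w-z_0)\right).\]
Since $e_z(u)=\exp(\tfrac{\lambda}{2}{\bar z}u)$ and $t^{-1}\cdot(w-z_0)=t^{-1}\cdot w-t^{-1}\cdot z_0$ (the action being linear), the last factor expands as $\exp(\tfrac{\lambda}{2}{\bar z}(t^{-1}\cdot w)-\tfrac{\lambda}{2}{\bar z}(t^{-1}\cdot z_0))$, and the whole computation is then just a regrouping of Gaussian exponents into a $w$-linear part (to be recognized as a coherent state) and a $w$-independent prefactor.

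The only substantive point is the compatibility of the ${\mathbb R}^m$-action with the bilinear pairing $(z,w)\mapsto{\bar z}w$: because each $\alpha_k(t)$ is real we have $\overline{e^{i\alpha_k(t)}}=e^{-i\alpha_k(t)}$, whence $\overline{(t\cdot z)}\,w={\bar z}\,(t^{-1}\cdot w)$ for all $z,w\in{\mathbb C}^n$. Applying this turns the $w$-linear exponent into $\tfrac{\lambda}{2}({\bar z_0}+\overline{(t\cdot z)})w=\tfrac{\lambda}{2}\overline{(z_0+t\cdot z)}\,w$, i.e. exactly $e_{t\cdot z+z_0}(w)$, while the leftover $w$-independent factor is $\chi(t)\exp(i\lambda c_0-\tfrac{\lambda}{2}(t^{-1}\cdot z_0){\bar z}-\tfrac{\lambda}{4}\vert z_0\vert^2)$, giving \eqref{eq:piez}. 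I do not expect a genuine obstacle here; the only care needed is in the bookkeeping of the conjugations under the action, so that the shift term $\tfrac{\lambda}{2}{\bar z}\,(t^{-1}\cdot z_0)$ appearing in the exponent is correctly rewritten as $\tfrac{\lambda}{2}(t^{-1}\cdot z_0){\bar z}$ to match the prefactor of \eqref{eq:piez}.
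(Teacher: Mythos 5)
Your proposal is correct; it is essentially the paper's computation run in the opposite logical direction. The paper first verifies the two special cases directly — \eqref{eq:sigez} via the same conjugation identity you isolate, namely $e_z(t^{-1}\cdot w)=\exp\left(\tfrac{\lambda}{2}{\bar z}(t^{-1}\cdot w)\right)=e_{t\cdot z}(w)$, and \eqref{eq:rhoez} by expanding $(\rho_{\lambda}(h)e_z)(w)$ — and then obtains \eqref{eq:piez} by composing, $\pi(g)e_z=\rho_{\lambda}(h)\left(\chi(t)\,e_{t\cdot z}\right)$, which requires rewriting $z_0\overline{(t\cdot z)}$ as $(t^{-1}\cdot z_0){\bar z}$, exactly the bookkeeping step you flag at the end. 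You instead prove the master identity \eqref{eq:piez} in one pass from the explicit formula of Proposition \ref{proppi} and recover \eqref{eq:sigez} and \eqref{eq:rhoez} as the degenerations $(z_0,c_0)=(0,0)$ and $t=0$; the latter implicitly uses $\sigma(0)=\chi(0)\,\id=\id$, which indeed holds since $\chi$ is a character, so there is no gap. Your route is marginally more economical (one Gaussian-exponent regrouping instead of two computations plus a substitution at the shifted point $t\cdot z$), while the paper's ordering makes the actions of $\sigma(t)$ and $\rho_{\lambda}(h)$ on coherent states available as standalone facts before assembling them — a presentational trade-off rather than a mathematical one.
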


\begin{proof} Let $t\in {\mathbb R}^m$ and $z\in {\mathbb C}^n$. Then, for each $w\in
{\mathbb C}^n$, we have 
\begin{equation*}(\sigma(t)e_z)(w)=\chi(t)e_z(t^{-1}\cdot w)=\chi(t)
\exp \left(\tfrac{\lambda}{2}{\bar z}(t^{-1}\cdot w)\right)=\chi(t)e_{t\cdot z}(w). \end{equation*} 
This proves Equation \ref{eq:sigez}. Similarly, for each $w\in
{\mathbb C}^n$, we can write
\begin{align*}
(\rho_{\lambda}(h)e_z)(w)=& \exp \left(i\lambda c_0+\tfrac{\lambda}{2}{\bar z_0}w-\tfrac{\lambda}{4}\vert z_0\vert^2\right)\,e_z(w-z_0)\\
=& \exp \left(i\lambda c_0+\tfrac{\lambda}{2}{\bar z_0}w-\tfrac{\lambda}{4}\vert z_0\vert^2\right)\,\exp \left(\tfrac{\lambda}{2}{\bar z}(w-z_0)\right)\\
=&\exp \left(i\lambda c_0-\tfrac{\lambda}{2}z_0{\bar z}-\tfrac{\lambda}{4}\vert z_0\vert^2\right)\,e_{z+z_0}(w)\end{align*}
hence we get Equation \ref{eq:rhoez}. By combining Equation \ref{eq:rhoez} with
Equation \ref{eq:sigez}, we obtain Equation \ref{eq:piez}.
\end{proof}

This leads us to introduce the action of $G$ on ${\mathbb C}^n$ defined by 
\begin{equation*} (t,z_0,c_0)\cdot z=t\cdot z+z_0,\quad t\in {\mathbb R}^m, z\,, z_0\in {\mathbb C}^n, c_0\in {\mathbb R}.\end{equation*}

\begin{proposition}\label{prop:covS} Let $A$ be an operator on $\mathcal F_{\lambda}$.
For each $g\in G$ and $z\in {\mathbb C}^n$, we have
\begin{equation*} S_{\lambda}(\pi(g)^{-1}A\pi(g))(z)=S_{\lambda}(A)(g\cdot z).\end{equation*}
\end{proposition}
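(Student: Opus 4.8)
The plan is to establish covariance of the Berezin symbol $S_\lambda$ by tracking how the conjugated operator acts on coherent states, exploiting Lemma \ref{lem:cs}. By definition,
\begin{equation*}
S_\lambda(\pi(g)^{-1}A\pi(g))(z)=\frac{\langle \pi(g)^{-1}A\pi(g)e_z,e_z\rangle_{\mathcal F_\lambda}}{\langle e_z,e_z\rangle_{\mathcal F_\lambda}}.
\end{equation*}
First I would use unitarity of $\pi(g)$ to rewrite the numerator as $\langle A\pi(g)e_z,\pi(g)e_z\rangle_{\mathcal F_\lambda}$. The whole calculation then reduces to substituting the explicit formula \ref{eq:piez} for $\pi(g)e_z$ from Lemma \ref{lem:cs}. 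Writing $g=(t,z_0,c_0)$ and setting $\kappa:=\chi(t)\exp\!\left(i\lambda c_0-\tfrac{\lambda}{2}(t^{-1}\cdot z_0){\bar z}-\tfrac{\lambda}{4}\vert z_0\vert^2\right)$, so that $\pi(g)e_z=\kappa\,e_{t\cdot z+z_0}$, I would observe that $\pi(g)e_z$ is a scalar multiple of a single coherent state $e_{g\cdot z}$, where $g\cdot z=t\cdot z+z_0$ is exactly the action introduced just before the statement.

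The key algebraic point is that the scalar $\kappa$ enters the numerator bilinearly: one factor $\kappa$ from $\pi(g)e_z$ in the first slot and one factor $\bar\kappa$ from the conjugate-linear second slot. Thus
\begin{equation*}
\langle A\pi(g)e_z,\pi(g)e_z\rangle_{\mathcal F_\lambda}=\vert\kappa\vert^2\,\langle A\,e_{g\cdot z},e_{g\cdot z}\rangle_{\mathcal F_\lambda}.
\end{equation*}
The same substitution applied to the denominator gives $\langle\pi(g)e_z,\pi(g)e_z\rangle_{\mathcal F_\lambda}=\vert\kappa\vert^2\langle e_{g\cdot z},e_{g\cdot z}\rangle_{\mathcal F_\lambda}$, using that $\pi(g)$ is unitary so this also equals $\langle e_z,e_z\rangle_{\mathcal F_\lambda}$. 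The factor $\vert\kappa\vert^2$ therefore cancels between numerator and denominator, leaving precisely
\begin{equation*}
\frac{\langle A\,e_{g\cdot z},e_{g\cdot z}\rangle_{\mathcal F_\lambda}}{\langle e_{g\cdot z},e_{g\cdot z}\rangle_{\mathcal F_\lambda}}=S_\lambda(A)(g\cdot z),
\end{equation*}
which is the desired identity.

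The main obstacle here is not analytic but bookkeeping: one must verify that $\pi(g)$ sends the coherent state $e_z$ to a genuine scalar multiple of another coherent state $e_{g\cdot z}$, rather than to some more general vector, and that the scalar's modulus squared is exactly what cancels. This is entirely supplied by Lemma \ref{lem:cs}, whose proof already handles the two constituent pieces $\sigma(t)$ and $\rho_\lambda(h)$. Since $\chi$ is a unitary character, $\vert\chi(t)\vert=1$, and the remaining exponential factor is of the form $\exp(i\lambda c_0-\tfrac{\lambda}{2}(t^{-1}\cdot z_0)\bar z-\tfrac{\lambda}{4}\vert z_0\vert^2)$; I would double-check that its modulus depends only on $z_0$ and $z$ in a way symmetric between the two slots, guaranteeing the clean cancellation. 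Everything else is a direct substitution, so the argument is short once Lemma \ref{lem:cs} is in hand.
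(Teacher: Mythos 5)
Your proposal is correct and follows essentially the same route as the paper: both rest on Lemma \ref{lem:cs} to write $\pi(g)e_z$ as a scalar multiple $\kappa\, e_{g\cdot z}$ of a single coherent state, then cancel $\vert\kappa\vert^2$ between numerator and denominator. The only cosmetic difference is that the paper obtains the denominator identity $\langle e_z,e_z\rangle_{\mathcal F_{\lambda}}=\vert\kappa\vert^2\langle e_{g\cdot z},e_{g\cdot z}\rangle_{\mathcal F_{\lambda}}$ by specializing the numerator computation to $A=\mathrm{id}$, which is the same fact you derive directly from unitarity of $\pi(g)$.
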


\begin{proof} Let $g=(t,h)\in G$ with $h=(z_0,c_0)\in H_n$. Let $z\in {\mathbb C}^n$. In order to simplify the notation, we set
\begin{equation*}\beta(t,h,z):=\chi(t)\, \exp \left(i\lambda c_0-\tfrac{\lambda}{2}(t^{-1}\cdot z_0){\bar z}-\tfrac{\lambda}{4}\vert z_0\vert^2\right).\end{equation*}
We can then write Lemma \ref{lem:cs} as
\begin{equation*}\pi(g)e_z=\beta(t,h,z)e_{g\cdot z}.\end{equation*}
Consequently, for each operator $A$ on  $\mathcal F_{\lambda}$, we have
\begin{align*}\langle \pi(g)^{-1}A\pi(g)e_z,e_z\rangle_{\mathcal F_{\lambda}}
=&\langle A\pi(g)e_z,\pi(g)e_z\rangle_{\mathcal F_{\lambda}}\\
=&\vert \beta(t,h,z)\vert^2\, \langle Ae_{g\cdot z}, e_{g\cdot z}\rangle_{\mathcal F_{\lambda}}. \end{align*}
In the case when $A$ is the identity operator, we get
\begin{equation*}\langle e_z,e_z\rangle_{\mathcal F_{\lambda}}=
\vert \beta(t,h,z)\vert^2\, \langle e_{g\cdot z}, e_{g\cdot z}\rangle_{\mathcal F_{\lambda}}.\end{equation*}
Hence 
\begin{align*} S_{\lambda}(\pi(g)^{-1}A&\pi(g))(z)=\frac{\langle \pi(g)^{-1}A\pi(g)e_z,e_z\rangle_{\mathcal F_{\lambda}}}{\langle e_z,e_z\rangle_{\mathcal F_{\lambda}}}=\frac{\langle A\pi(g)e_z,\pi(g)e_z\rangle_{\mathcal F_{\lambda}}}{\langle e_z,e_z\rangle_{\mathcal F_{\lambda}}}\\
&=\frac{\langle Ae_{g\cdot z}, e_{g\cdot z}\rangle_{\mathcal F_{\lambda}}}{\langle e_{g\cdot z}, e_{g\cdot z}\rangle_{\mathcal F_{\lambda}}}
=S_{\lambda}(A)(g\cdot z). \end{align*}
\end{proof}

\begin{proposition}\label{prop:covW0} Let $A$ be an operator on $\mathcal F_{\lambda}$.
For each $g\in G$ and $z\in {\mathbb C}^n$, we have
\begin{equation*} W_0(\pi(g)^{-1}A\pi(g))(z)=W_0(A)(g\cdot z).\end{equation*}
\end{proposition}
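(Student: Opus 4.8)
The plan is to establish the covariance of $W_0$ by exploiting the integral formula from Proposition \ref{prop:intW} together with the explicit action of $\pi(g)$ on the coherent states recorded in Lemma \ref{lem:cs}. The key observation is that $W_0(A)(z)=\Tr(A\Omega_0(z))$ depends on $A$ only through its kernel $k_A$, and the formula \eqref{eq:intWsym} expresses $W_0(A)(z)$ as a Gaussian-weighted integral of $k_A(z+w,z-w)$. So the whole argument reduces to understanding how the kernel of $\pi(g)^{-1}A\pi(g)$ is related to the kernel of $A$.

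First I would compute the kernel $k_B$ of the conjugated operator $B=\pi(g)^{-1}A\pi(g)$ in terms of $k_A$. By definition $k_B(z,w)=\langle B e_w, e_z\rangle_{\mathcal F_\lambda}=\langle A\pi(g)e_w,\pi(g)e_z\rangle_{\mathcal F_\lambda}$, and Lemma \ref{lem:cs} gives $\pi(g)e_z=\beta(t,h,z)\,e_{g\cdot z}$ with the scalar $\beta(t,h,z)$ already introduced in the proof of Proposition \ref{prop:covS}. Hence
\begin{equation*}
k_B(z,w)=\beta(t,h,w)\,\overline{\beta(t,h,z)}\;\langle A e_{g\cdot w},e_{g\cdot z}\rangle_{\mathcal F_\lambda}=\beta(t,h,w)\,\overline{\beta(t,h,z)}\;k_A(g\cdot z,\,g\cdot w).
\end{equation*}
Thus the conjugation simply reparametrizes the arguments of $k_A$ via the $G$-action $g\cdot z=t\cdot z+z_0$ and multiplies by an explicit exponential cocycle factor coming from $\beta$.

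Next I would substitute this expression into the integral formula \eqref{eq:intWsym} for $W_0(B)(z)$, plugging in $(z+w)$ and $(z-w)$ for the two kernel arguments. The arguments become $g\cdot(z+w)=t\cdot(z+w)+z_0$ and $g\cdot(z-w)=t\cdot(z-w)+z_0$. Writing $Z=g\cdot z=t\cdot z+z_0$ and noting that $t$ acts by a diagonal unitary (multiplication by unimodular phases $e^{i\alpha_k(t)}$ on each coordinate), we have $g\cdot(z\pm w)=Z\pm(t\cdot w)$. The hoped-for conclusion $W_0(B)(z)=W_0(A)(Z)$ should then emerge once we change the integration variable from $w$ to $w'=t\cdot w$. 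The main obstacle — and the step deserving genuine care — is verifying that the product of the cocycle factor $\beta(t,h,z+w)\overline{\beta(t,h,z-w)}$ and the original Gaussian weight $\exp(\tfrac{\lambda}{2}(-z\bar z-w\bar w+z\bar w-\bar z w))$ transforms correctly into the Gaussian weight for $W_0(A)$ evaluated at $Z$ with variable $w'$.

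I expect this to work because the action of $t$ is by unimodular phases, so $\vert t\cdot w\vert=\vert w\vert$ and the Lebesgue measure $d\mu_\lambda$ is invariant under $w\mapsto t\cdot w$; this handles the $-w\bar w$ term and the Jacobian. The surviving exponential factors from $\beta$ — which involve $c_0$, $\vert z_0\vert^2$, and the cross terms in $z_0$ and $\bar z$ — must combine with the shift $z\mapsto Z=t\cdot z+z_0$ in the $z$-dependent part of the Gaussian. The cleanest route is to expand both sides fully and match the quadratic forms in $z_0$, $t\cdot z$, and $t\cdot w$; the imaginary $c_0$ contributions cancel since $\beta$ and $\overline\beta$ carry opposite signs of $i\lambda c_0$, and the $\vert z_0\vert^2$ terms likewise cancel. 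I would carry out this bookkeeping carefully, since a single sign error in the cross terms is the most likely pitfall; once the weight and measure are shown to match after the substitution $w'=t\cdot w$, the equality $W_0(\pi(g)^{-1}A\pi(g))(z)=W_0(A)(g\cdot z)$ follows directly.
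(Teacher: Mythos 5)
Your proposal is correct, and its core mechanism is the same as the paper's: express the kernel of the conjugated operator in terms of $k_A$, insert it into the integral formula of Proposition \ref{prop:intW}, and conclude by a change of variables using the fact that $t$ acts by unimodular phases (so both $d\mu_\lambda$ and $\vert w\vert$ are preserved). The difference is organizational. The paper factors $g=(t,0,0)\cdot(0,h)$ and proves covariance separately for the two factors: for $\sigma(t)$ the kernel transforms with no cocycle at all ($k_{A'}(z,w)=k_A(t\cdot z,t\cdot w)$), so the change of variables is immediate; for $\rho_\lambda(h)$ the cocycle appears but the translation bookkeeping involves no phases. Your version handles a general $g$ in one pass via Lemma \ref{lem:cs}, which buys a single uniform computation and avoids the factorization step, at the price of a heavier quadratic-form matching in which the phase action and the translation interact (e.g.\ terms like $(t^{-1}\cdot z_0)\overline{(t^{-1}\cdot w')}=z_0\bar w'$ must be simplified). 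Both are essentially the same amount of work; the paper's split makes each verification shorter, yours makes the covariance statement visibly a one-step consequence of the coherent-state transformation law.

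Two bookkeeping slips in your outline, neither fatal since your stated procedure is to expand both sides fully. First, substituting the arguments $(z+w,z-w)$ into $k_B(z,w)=\beta(t,h,w)\overline{\beta(t,h,z)}k_A(g\cdot z,g\cdot w)$ gives the factor $\beta(t,h,z-w)\,\overline{\beta(t,h,z+w)}$, i.e.\ the conjugate sits on the $z+w$ slot, not the $z-w$ slot as you wrote. Second, the $\vert z_0\vert^2$ terms do \emph{not} cancel: each of $\beta$ and $\overline\beta$ contributes $\exp(-\tfrac{\lambda}{4}\vert z_0\vert^2)$, and the resulting factor $\exp(-\tfrac{\lambda}{2}\vert z_0\vert^2)$ is precisely what is needed to produce the $-\vert z_0\vert^2$ piece of $-Z\bar Z=-\vert t\cdot z\vert^2-\vert z_0\vert^2-2\Rea\bigl((t\cdot z)\bar z_0\bigr)$ in the target Gaussian weight at $Z=g\cdot z$. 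Only the $i\lambda c_0$ contributions cancel outright. With these corrections the quadratic forms match exactly and your argument goes through.
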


\begin{proof} Let $A$ be an operator on $\mathcal F_{\lambda}$. For $t\in {\mathbb R}^m$ 
we define $A':=\sigma(t)^{-1}A\sigma(t)$. Then, for each $f\in \mathcal F_{\lambda}$ and each $z\in {\mathbb C}^n$,  we have
\begin{align*}(A'f)(z)=&\chi(t)^{-1}(A\sigma(t)f)(t\cdot z)\\
=&\int_{{\mathbb C}^n}\,k_A(t\cdot z,w)\,f(t^{-1}\cdot w)e^{-\lambda \vert w\vert^2/2}\,d\mu_{\lambda}(w)\\
=&\int_{{\mathbb C}^n}\,k_A(t\cdot z,t\cdot w)\,f(w)e^{-\lambda \vert w\vert^2/2}\,d\mu_{\lambda}(w).
\end{align*}
Thus the kernel of $A'$ is $k_{A'}(z,w)=k_A(t\cdot z,t\cdot w)$. Hence we have
\begin{equation*}W_0(A')(z)=2^n\int_{{\mathbb C}^n}k_A(t\cdot (z+w),t\cdot (z-w))\exp \left(\tfrac{\lambda}{2}\left(-z{\bar z}-w{\bar w}+z{\bar w}-{\bar z}w\right)\right) d\mu_{\lambda}(w)\end{equation*} 
and, by making the change of variables $w\rightarrow t^{-1}\cdot w$,
we obtain $W_0(A')(z)=W_0(A)(t\cdot z)$. This proves the covariance property for $g$ of the form $(t,0,0)$. Similarly, for $h=(z_0,c_0)\in H_n$, let $A'':=\rho_{\lambda}(h)^{-1}A\rho_{\lambda}(h)$. For each $z,w \in {\mathbb C}^n$, we have
\begin{align*}
k_{A''}(z,w)&=\langle A''e_w,e_z\rangle_{\mathcal F_{\lambda}}\\
&=\langle A\rho_{\lambda}(h)e_w,\rho_{\lambda}(h)e_z\rangle_{\mathcal F_{\lambda}}\\
&= \exp \left(-\tfrac{\lambda}{2}z_0{\bar w}-\tfrac{\lambda}{2}\bar{z_0}z-\tfrac{\lambda}{2}\vert z_0\vert^2\right)
\langle Ae_{w+z_0},e_{z+z_0}\rangle_{\mathcal F_{\lambda}}\\
&= \exp \left(-\tfrac{\lambda}{2}z_0{\bar w}-\tfrac{\lambda}{2}\bar{z_0}z-\tfrac{\lambda}{2}\vert z_0\vert^2\right)k_A(z+z_0,w+z_0).
\end{align*} After some easy computations starting from Equation \ref{eq:intWsym}, this implies that 
\begin{equation*}W_0(A'')(z)=W_0(A)(z+z_0), \quad z \in {\mathbb C}^n.\end{equation*}
Hence we have covariance of $W_0$ for each $g=(0,h)\in G$ with $h\in H_n$. Since
each $g\in G$ can be written as the product of an element of the form $(t,0,0)$ by an element
of the form $(0,h)$ we have proved the desired result.
\end{proof}

\section{Complex Weyl symbols of representation operators} \label{sec:6}
In this section, we compute $S(\pi(g))$ and $W_0(\pi(g))$ for $g\in G$.

\begin{proposition} \label{propSpi} For each $g=(t,z_0,c_0)\in G$, the kernel of $\pi(g)$
is
\begin{equation*}k_{\pi(g)}(z,w)=\chi(t) e^{i\lambda c_0}
 \exp \left(\tfrac{\lambda}{2}{\bar z_0}z+\tfrac{\lambda}{2}{\bar w}
(t^{-1}\cdot (z-z_0))
-\tfrac{\lambda}{4}\vert z_0\vert^2\right). \end{equation*}
Consequently, we have
\begin{equation*}S(\pi(g))(z)=\chi(t) e^{i\lambda c_0}
 \exp \left(\tfrac{\lambda}{2}{\bar z_0}z+\tfrac{\lambda}{2}{\bar z}
(t^{-1}\cdot (z-z_0))-\tfrac{\lambda}{2}\vert z \vert^2
-\tfrac{\lambda}{4}\vert z_0\vert^2\right)\end{equation*}
for each $z\in {\mathbb C}^n$.
\end{proposition}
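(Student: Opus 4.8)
The plan is to read off the kernel of $\pi(g)$ directly from its definition $k_{\pi(g)}(z,w)=\langle \pi(g)e_w,e_z\rangle_{\mathcal F_{\lambda}}$, and then to deduce the Berezin symbol by restricting to the diagonal $w=z$. The only tools required are Lemma \ref{lem:cs}, which describes the action of $\pi(g)$ on coherent states, and the reproducing property $f(z)=\langle f,e_z\rangle_{\mathcal F_{\lambda}}$, which evaluates any overlap $\langle e_u,e_z\rangle_{\mathcal F_{\lambda}}=e_u(z)=\exp(\tfrac{\lambda}{2}\bar u z)$ of two coherent states.

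First I would apply Equation \ref{eq:piez} with $z$ replaced by $w$, writing $\pi(g)e_w=\gamma\,e_{t\cdot w+z_0}$ where $\gamma=\chi(t)\exp\left(i\lambda c_0-\tfrac{\lambda}{2}(t^{-1}\cdot z_0)\bar w-\tfrac{\lambda}{4}\vert z_0\vert^2\right)$. Since $\gamma$ does not depend on the variable $z$, it factors out of the inner product, leaving $k_{\pi(g)}(z,w)=\gamma\,\langle e_{t\cdot w+z_0},e_z\rangle_{\mathcal F_{\lambda}}$. By the reproducing property the remaining overlap equals $\exp\left(\tfrac{\lambda}{2}\,\overline{(t\cdot w+z_0)}\,z\right)=\exp\left(\tfrac{\lambda}{2}\,\overline{t\cdot w}\,z+\tfrac{\lambda}{2}\bar z_0 z\right)$, so the whole computation reduces to collecting exponents.

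The main (and essentially only) difficulty is the bookkeeping needed to reach the announced form. The key observation is that complex conjugation converts the diagonal action into its inverse, $\overline{t\cdot w}=t^{-1}\cdot\bar w$, whence $\overline{t\cdot w}\,z=\bar w\,(t^{-1}\cdot z)$ and $(t^{-1}\cdot z_0)\bar w=\bar w\,(t^{-1}\cdot z_0)$. Using the linearity of the map $z\mapsto t^{-1}\cdot z$, the two cross terms then combine as
\[\tfrac{\lambda}{2}\,\bar w\,(t^{-1}\cdot z)-\tfrac{\lambda}{2}\,\bar w\,(t^{-1}\cdot z_0)=\tfrac{\lambda}{2}\,\bar w\,(t^{-1}\cdot(z-z_0)),\]
while the terms $i\lambda c_0$, $\tfrac{\lambda}{2}\bar z_0 z$ and $-\tfrac{\lambda}{4}\vert z_0\vert^2$ survive unchanged. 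This produces exactly the claimed kernel $k_{\pi(g)}(z,w)$.

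For the Berezin symbol I would use $S(\pi(g))(z)=k_{\pi(g)}(z,z)/\langle e_z,e_z\rangle_{\mathcal F_{\lambda}}$, which follows from $k_A(z,z)=\langle Ae_z,e_z\rangle_{\mathcal F_{\lambda}}$ and the definition of $S_{\lambda}$. Setting $w=z$ in the kernel and dividing by $\langle e_z,e_z\rangle_{\mathcal F_{\lambda}}=\exp(\tfrac{\lambda}{2}\vert z\vert^2)$ introduces the extra summand $-\tfrac{\lambda}{2}\vert z\vert^2$ in the exponent and yields the stated formula for $S(\pi(g))(z)$ with no further work.
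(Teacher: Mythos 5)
Your proposal is correct and follows essentially the same route as the paper: both compute $k_{\pi(g)}(z,w)=\langle\pi(g)e_w,e_z\rangle_{\mathcal F_{\lambda}}$ via the reproducing property and then read off the Berezin symbol on the diagonal. The only cosmetic difference is that the paper evaluates $(\pi(g)e_w)(z)$ directly from Proposition \ref{proppi}, which produces the factor $\exp\left(\tfrac{\lambda}{2}\bar w(t^{-1}\cdot(z-z_0))\right)$ with no further manipulation, whereas you route through Lemma \ref{lem:cs} and then need the bookkeeping identity $\overline{t\cdot w}\,z=\bar w(t^{-1}\cdot z)$ to recombine the exponents---the two computations coincide, since Lemma \ref{lem:cs} is itself a consequence of Proposition \ref{proppi}.
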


\begin{proof} Let  $g=(t,z_0,c_0)\in G$. By the reproducing property, we can write
\begin{equation*}k_{\pi(g)}(z,w)=\langle \pi(g)e_w,e_z\rangle_{\mathcal F_{\lambda}}=(\pi(g)e_w)(z)\end{equation*} for each $z,w\in {\mathbb C}^n$. Then we see that the result follows from Proposition \ref{proppi}.
\end{proof}

In order to compute $W_0(\pi(g))$ for $g\in G$ we need the following lemma about the computation of Gaussian integrals. This lemma is a variant of \cite[Theorem 3, p. 258]{Fo}.

For $z\in {\mathbb C}$, we define $z^{1/2}$ as the principal determination of the square root (with branch cut along the negative real axis).

\begin{lemma} \cite{CaComp} \label{lemgauss} Let $A,B, D$ be $n\times n$ complex matrices such that
$A^t=A, D^t=D$. Let $M=\bigl(\begin{smallmatrix} A&B^t\\
B&D \end{smallmatrix}\bigr)$, $U=\bigl(\begin{smallmatrix} I_n&iI_n\\
I_n&-iI_n \end{smallmatrix}\bigr)$ and $N=U^tMU$. Assume that $\Rea (N)$ is positive definite. Let $u,v \in {\mathbb C}^n$. Then we have 
\begin{align*}\int_{{\mathbb C}^n}&\exp\left(-\left( w(Aw)+{\bar w}(D{\bar w})+2{\bar w}(Bw)\right)\right)\exp (uw+v{\bar w})\,dm(w)\\
=&\pi^n(\Det N)^{-1/2}\exp \left( \tfrac{1}{4}\begin{pmatrix}u&v\end{pmatrix}M^{-1}\begin{pmatrix}u\\v\end{pmatrix}\right).
\end{align*}
\end{lemma}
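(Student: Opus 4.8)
The plan is to convert this complex Gaussian integral into a genuine real Gaussian integral over ${\mathbb R}^{2n}$ by writing $w=x+iy$ with $x,y\in{\mathbb R}^n$, and then to apply the standard real Gaussian formula to the symmetric matrix $N$. First I would record the algebraic fact that $U$ is built precisely so that $\begin{pmatrix}w\\ \bar w\end{pmatrix}=U\begin{pmatrix}x\\ y\end{pmatrix}$, and rewrite the integrand intrinsically in the pair $(w,\bar w)$. Since $A^t=A$ and $D^t=D$, the matrix $M$ is symmetric, and because $wB^t\bar w=\bar w Bw$ as scalars one checks that
\[ w(Aw)+\bar w(D\bar w)+2\bar w(Bw)=\begin{pmatrix}w&\bar w\end{pmatrix}M\begin{pmatrix}w\\ \bar w\end{pmatrix},\qquad uw+v\bar w=\begin{pmatrix}u&v\end{pmatrix}\begin{pmatrix}w\\ \bar w\end{pmatrix}. \]

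Substituting $\begin{pmatrix}w\\\bar w\end{pmatrix}=U\begin{pmatrix}x\\y\end{pmatrix}$ then turns the quadratic part into $\xi^tN\xi$ with $\xi=\begin{pmatrix}x\\y\end{pmatrix}$ and $N=U^tMU$ (symmetric, since $M$ is), and turns the linear part into $b^t\xi$ with $b=U^t\begin{pmatrix}u\\v\end{pmatrix}$. As $dm(w)=dx\,dy$, no Jacobian appears, so the integral equals $\int_{{\mathbb R}^{2n}}\exp(-\xi^tN\xi+b^t\xi)\,d\xi$. The hypothesis that $\Rea(N)$ is positive definite makes $\Rea(\xi^tN\xi)=\xi^t\Rea(N)\xi$ grow quadratically in $\xi$, which guarantees absolute convergence (the Gaussian factor dominates the exponential linear factor).

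At this point I would invoke the real Gaussian formula underlying \cite[Theorem 3, p.258]{Fo}, valid for a complex symmetric matrix with positive-definite real part,
\[ \int_{{\mathbb R}^{2n}}\exp(-\xi^tN\xi+b^t\xi)\,d\xi=\pi^n(\Det N)^{-1/2}\exp\left(\tfrac14\,b^tN^{-1}b\right), \]
and it only remains to rewrite the exponent in terms of $M$. Using $b=U^t\begin{pmatrix}u\\v\end{pmatrix}$ and $N^{-1}=U^{-1}M^{-1}(U^t)^{-1}$, the factors of $U$ cancel and
\[ \tfrac14\,b^tN^{-1}b=\tfrac14\begin{pmatrix}u&v\end{pmatrix}U\,(U^tMU)^{-1}\,U^t\begin{pmatrix}u\\v\end{pmatrix}=\tfrac14\begin{pmatrix}u&v\end{pmatrix}M^{-1}\begin{pmatrix}u\\v\end{pmatrix}, \]
which is the asserted identity. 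Here $N$ is invertible because a symmetric matrix with positive-definite real part is nonsingular, and $U$ is invertible since $\Det U=(-2i)^n$, so $M^{-1}$ exists.

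The main obstacle is not the change of variables, which is purely formal linear algebra, but the correct determination of the branch of $(\Det N)^{-1/2}$: the formula for real positive-definite matrices must be analytically continued in $N$ throughout the cone $\Rea(N)>0$, and one must verify that this continuation is compatible with the principal square-root convention fixed just before the lemma. I would lean on Folland's theorem for this branch analysis rather than redo it, the remaining content of the proof being exactly the reduction described above.
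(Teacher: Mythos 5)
Your argument is correct and is essentially the paper's own route: the paper offers no proof of Lemma \ref{lemgauss}, importing it from \cite{CaComp} and describing it as a variant of \cite[Theorem 3, p.~258]{Fo}, and your reduction --- writing $\bigl(\begin{smallmatrix} w\\ \bar w\end{smallmatrix}\bigr)=U\bigl(\begin{smallmatrix} x\\ y\end{smallmatrix}\bigr)$, using $M^t=M$ to turn the exponent into $-\xi^tN\xi+b^t\xi$ with $b=U^t\bigl(\begin{smallmatrix} u\\ v\end{smallmatrix}\bigr)$, invoking the complex-symmetric Gaussian formula on ${\mathbb R}^{2n}$ (whence $\pi^n$), and cancelling the factors of $U$ in $\tfrac14 b^tN^{-1}b$ --- is exactly the intended passage to that theorem, with your invertibility remarks ($\Det U=(-2i)^n$, and $N$ nonsingular since $\Rea({\bar z}^{\,t}Nz)={\bar z}^{\,t}\Rea(N)z>0$ for $z\neq 0$) also in order. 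Your instinct to worry about the branch is the right one, and the resolution is sharper than mere ``compatibility'': Folland's $(\Det N)^{-1/2}$ is defined by analytic continuation from real positive-definite $N$ (well defined because the set of complex symmetric matrices with positive-definite real part is convex, hence simply connected, and $\Det$ does not vanish there), and it is this continued branch that makes the identity true --- it can genuinely differ by a sign from the pointwise principal branch once $n\geq 2$ (for $N=\epsilon I_4+iI_4$ the continued value tends to $-1$ while the principal prescription gives $+1$) --- so the principal-branch convention stated before the lemma should be read as Folland's continuation, which is what the paper's explicit determinant formulas in Propositions \ref{propWpi} and \ref{propbt} implement in practice.
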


For $t\in {\mathbb R}^m$, it is convenient to introduce the diagonal matrix
\begin{equation*}A(t):=\Diag(e^{i\alpha_1(t)},e^{i\alpha_2(t)},\ldots,e^{i\alpha_n(t)}).\end{equation*} Then we have $t\cdot z=A(t)z$ for each $t\in {\mathbb R}^m$ and 
$z \in {\mathbb C}^n$.

\begin{proposition} \label{propWpi} Let  $g=(t,z_0,c_0)\in G$ such that $\alpha_k(t)\notin
\pi+2\pi{\mathbb Z}$ for each $k=1,2,\ldots, n$. Then, for each $z \in {\mathbb C}^n$, we have
\begin{align*}W_0(\pi(g))(z)&=2^n\chi(t) e^{i\lambda c_0}\Det(I_n+A(t^{-1}))^{-1}
 \exp \left(-{\lambda}(t^{-1}\cdot z_0){\bar z}
-{\lambda}\vert z\vert^2
-\tfrac{\lambda}{4}\vert z_0\vert^2\right)\\
&\times \exp \left( \tfrac{\lambda}{2}(t^{-1}\cdot z_0+2z)(I_n+A(t))^{-1}(\overline{t^{-1}\cdot z_0+2z})\right),
\end{align*} 
and, equivalently,
\begin{align*}W_0&(\pi(g))(z)=2^n\chi(t) e^{i\lambda c_0}
 \exp \left(-{\lambda}(t^{-1}\cdot z_0){\bar z}
-{\lambda}\vert z\vert^2
-\tfrac{\lambda}{4}\vert z_0\vert^2\right)\\
&\times \prod_{k=1}^n(1+e^{-i\alpha_k(t)})^{-1}\exp \left( \tfrac{\lambda}{2}
\sum_{k=1}^n(1+e^{i\alpha_k(t)})^{-1}\vert e^{-i\alpha_k(t)}a_k+2z_k\vert^2\right)
\end{align*} 
where $z_0=(a_1,a_2,\ldots,a_n)$.
\end{proposition}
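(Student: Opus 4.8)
The plan is to compute $W_0(\pi(g))$ directly from the integral formula of Proposition \ref{prop:intW}, feeding in the explicit kernel $k_{\pi(g)}$ furnished by Proposition \ref{propSpi}. Substituting $k_{\pi(g)}(z+w,z-w)$ into Equation \ref{eq:intWsym} and using $t^{-1}\cdot u=A(t)^{-1}u$, the integrand becomes $\chi(t)e^{i\lambda c_0}$ times the exponential of a quadratic polynomial in $(w,\bar w)$. First I would expand the kernel contribution $\tfrac{\lambda}{2}\bar{z_0}(z+w)+\tfrac{\lambda}{2}(\bar z-\bar w)A(t)^{-1}(z+w-z_0)-\tfrac{\lambda}{4}\vert z_0\vert^2$, add the Gaussian weight $\tfrac{\lambda}{2}(-z\bar z-w\bar w+z\bar w-\bar z w)$, and sort the total into its quadratic, linear and constant parts in $w$ and $\bar w$.

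The key structural point is that no pure $w\cdot w$ or $\bar w\cdot\bar w$ term occurs, so in the notation of Lemma \ref{lemgauss} one has $A=D=0$, while the cross term $-\tfrac{\lambda}{2}\bar w(I_n+A(t)^{-1})w$ produces the diagonal matrix $B=\tfrac{\lambda}{4}(I_n+A(t)^{-1})$, which is symmetric. Reading off the linear coefficients gives $u=\tfrac{\lambda}{2}\bigl(\bar{z_0}+(A(t)^{-1}-I_n)\bar z\bigr)$ (the coefficient of $w$) and $v=\tfrac{\lambda}{2}\bigl(z+A(t)^{-1}(z_0-z)\bigr)$ (the coefficient of $\bar w$), while the $w$-independent terms form the surviving constant. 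Rewriting the integral against $d\mu_{\lambda}(w)=(2\pi)^{-n}\lambda^n\,dm(w)$ then puts it in exactly the shape required by Lemma \ref{lemgauss}.

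Next I would verify the convergence hypothesis. With $A=D=0$ and $B$ diagonal, a short block computation shows that $N=U^tMU$ is block-diagonal with both diagonal blocks equal to $2B$, so $\Rea(N)>0$ reduces to $\Rea(1+e^{-i\alpha_k(t)})=1+\cos\alpha_k(t)>0$ for every $k$, which holds precisely when $\alpha_k(t)\notin\pi+2\pi{\mathbb Z}$ --- exactly the hypothesis of the proposition. Applying the lemma yields the prefactor $\pi^n(\Det N)^{-1/2}$; combining it with the $2^n$ of Equation \ref{eq:intWsym} and the $(2\pi)^{-n}\lambda^n$ from the measure collapses to $2^n\Det(I_n+A(t^{-1}))^{-1}$, since $\Det N=(\Det 2B)^2$ with $\Det 2B=(\lambda/2)^n\prod_{k=1}^n(1+e^{-i\alpha_k(t)})$, the principal square root being the correct one because every eigenvalue of $N$ lies in the open right half-plane.

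It then remains to assemble the exponent. Since $M=\bigl(\begin{smallmatrix}0&B\\B&0\end{smallmatrix}\bigr)$ has inverse $\bigl(\begin{smallmatrix}0&B^{-1}\\B^{-1}&0\end{smallmatrix}\bigr)$, the quadratic form of Lemma \ref{lemgauss} is $\tfrac14(u,v)M^{-1}(u,v)^t=\tfrac12\,uB^{-1}v$ with $B^{-1}=\tfrac{4}{\lambda}(I_n+A(t)^{-1})^{-1}$; adding the constant part and using $(I_n+A(t)^{-1})^{-1}=A(t)(I_n+A(t))^{-1}$ to trade $A(t)^{-1}$ for $A(t)$ should regroup everything into $\tfrac{\lambda}{2}(t^{-1}\cdot z_0+2z)(I_n+A(t))^{-1}(\overline{t^{-1}\cdot z_0+2z})$ together with the remaining $-\lambda(t^{-1}\cdot z_0)\bar z-\lambda\vert z\vert^2-\tfrac{\lambda}{4}\vert z_0\vert^2$. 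The second, product form is then immediate because $A(t)$ is diagonal, writing out each factor componentwise with $z_0=(a_1,\dots,a_n)$. I expect the main obstacle to be exactly this last bookkeeping: correctly carrying the many linear and constant contributions through $\tfrac12\,uB^{-1}v$ and matching the square-root branch in $(\Det N)^{-1/2}$ to the stated $\Det(I_n+A(t^{-1}))^{-1}$. The conceptually cleanest step, by contrast, is the identification of the convergence condition $\Rea(N)>0$ with the hypothesis on the $\alpha_k(t)$.
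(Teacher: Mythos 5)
Your proposal is correct and follows essentially the same route as the paper: both insert the kernel of Proposition \ref{propSpi} into the integral formula of Proposition \ref{prop:intW} and evaluate the resulting Gaussian integral via Lemma \ref{lemgauss} with $A=D=0$, $B=\tfrac{\lambda}{4}(I_n+A(t)^{-1})$, the same positivity check $\Rea(N)>0 \Leftrightarrow \alpha_k(t)\notin\pi+2\pi{\mathbb Z}$, and the same determinant and $M^{-1}$ bookkeeping. The only difference is cosmetic: the paper first performs a change of variables so that the kernel appears as $k_{\pi(g)}(w,2z-w)$, giving the cleaner data $u=\tfrac{\lambda}{2}({\bar z_0}+2\overline{t\cdot z})$, $v=\tfrac{\lambda}{2}(t^{-1}\cdot z_0+2z)$ with the constant prefactor isolated at once, whereas you carry the extra linear and constant terms through $\tfrac{1}{2}uB^{-1}v$ directly --- and your $u$, $v$ and surviving constants do recombine into the same exponent.
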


\begin{proof} Let $g=(t,z_0,c_0)\in G$. By performing the change of variables $w\rightarrow w-z_0$ in Equation \ref{eq:intWsym}, we get
\begin{equation} \label{eq:intWbis}
W_0(\pi(g))(z)=2^n\int_{{\mathbb C}^n}k_{\pi(g)}(w,2z-w)\exp \left({\lambda}\left(-z{\bar z}+z{\bar w}-\tfrac{1}{2}w{\bar w}\right)\right) d\mu_{\lambda}(w). 
\end{equation}
In this equation, we replace $k_{\pi(g)}(w,2z-w)$ by its expression derived from Proposition \ref{propSpi}. Then, introducing the notation
\begin{equation*}I(t,z_0,z):=\int_{{\mathbb C}^n}\exp \left(\tfrac{\lambda}{2}w({\bar z_0}+2\overline{t\cdot z})+\tfrac{\lambda}{2}{\bar w}(t^{-1}\cdot z_0+2z)-
\tfrac{\lambda}{2}({\bar w}(t^{-1}\cdot w)+w{\bar w})
\right) dm(w),\end{equation*}
we get
\begin{equation*}W_0(\pi(g))(z)=\left(\tfrac{\lambda}{\pi}\right)^n\chi(t) e^{i\lambda c_0}
 \exp \left(-{\lambda}{\bar z}(t^{-1}\cdot z_0)
-{\lambda}\vert z\vert^2
-\tfrac{\lambda}{4}\vert z_0\vert^2\right)I(t,z_0,z). \end{equation*}
The computation of $I(t,z_0,z)$ can be performed by using Lemma \ref{lemgauss}. With
the notation as in the lemma we take $A=D=0$, $B=\tfrac{\lambda}{4}(I_n+A(t^{-1}))$ and
\begin{equation*} u=\tfrac{\lambda}{2}({\bar z_0}+2\overline{t\cdot z});\quad
v=\tfrac{\lambda}{2}(t^{-1}\cdot z_0+2z).\end{equation*}
In this case, we have
\begin{equation*}N=\tfrac{\lambda}{2}\,\begin{pmatrix}I_n+A(t)^{-1}&0\\0&I_n+A(t)^{-1}\\\end{pmatrix}\end{equation*} hence
\begin{equation*}\Rea(N)=\tfrac{\lambda}{2}\Diag(1+\cos \alpha_1(t),\ldots,1+\cos \alpha_n(t),1+\cos \alpha_1(t),\ldots,1+\cos \alpha_n(t)).
\end{equation*}
Assuming that $\alpha_k(t)\notin
\pi+2\pi{\mathbb Z}$ for each $k=1,2,\ldots, n$, the matrix
$\Rea(N)$ is positive definite. Moreover, we have
\begin{equation*}\Det(N)=\left(\tfrac{\lambda}{2}\right)^{2n}
\Det(I_n+A(t)^{-1})^2\end{equation*} and
\begin{equation*}\tfrac{1}{4}\begin{pmatrix}u&v\end{pmatrix}M^{-1}\begin{pmatrix}
u\\v\end{pmatrix}=\tfrac{\lambda}{2}(t^{-1}\cdot z_0+2z)(I_n+A(t))^{-1}(\overline {t^{-1}\cdot z_0+2z}).
\end{equation*} The result follows.
\end{proof}

Let ${\mathfrak h}_n$ be the Lie algebra of $H_n$ and $\mathfrak g$ be the Lie algebra of $G$. We write the elements of $\mathfrak g$ as $(t,u,c)$ with $t\in {\mathbb R}^m$,
$u \in {\mathbb C}^n$ and $c\in {\mathbb R}$. The Lie brackets of $\mathfrak g$ are
\begin{equation*}[(t,u,c),(t',u',c')]=(0,i(\alpha(t)u'-\alpha(t')u),\omega((u,{\bar u}),(u',{\bar u'})))\end{equation*} with the notation 
\begin{equation*}\alpha(t)u=(\alpha_1(t)u_1,\alpha_2(t)u_2,\ldots,\alpha_n(t)u_n),\quad
t\in {\mathbb R}^m, u=(u_1,u_2,\ldots,u_n)\in {\mathbb C}^n.
\end{equation*}

From Proposition \ref{propSpi} and Proposition \ref{propWpi}, we easily deduce the following result by differentiation.

\begin{proposition} \label{propWdpi} Let $X=(t,u,c)\in \mathfrak g$. Then we have
for each $z\in {\mathbb C}^n$
\begin{equation*}S(d\pi(X))(z)=d\chi(t)+i\lambda c+\tfrac{\lambda}{2}({\bar u}z-
{\bar z}u)-\tfrac{\lambda}{2}i{\bar z}(\alpha(t)z)\end{equation*} and
\begin{equation*}W_0(d\pi(X))(z)=d\chi(t)+i\lambda c+\tfrac{\lambda}{2}({\bar u}z-
{\bar z}u)+\tfrac{1}{2}i\sum_{k=1}^n\alpha_k(t)(1-\lambda \vert z_k\vert^2).\end{equation*}
\end{proposition}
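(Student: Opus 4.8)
The plan is to obtain both formulas in Proposition \ref{propWdpi} by differentiating the closed expressions for $S(\pi(g))$ and $W_0(\pi(g))$ established in Propositions \ref{propSpi} and \ref{propWpi}. Concretely, for a fixed $X=(t,u,c)\in\mathfrak g$ I would consider the one-parameter subgroup $\exp(sX)$ and compute $\frac{d}{ds}\big|_{s=0}$ of the corresponding symbol, using $d\pi(X)=\frac{d}{ds}\big|_{s=0}\pi(\exp(sX))$ together with the fact that $S$ and $W_0$ are linear in the operator, so they commute with the derivative. First I would need the explicit form of $\exp(sX)$ in the group $G$; since $\mathfrak g$ has the bracket displayed just before the statement and the center sits in the $c$-coordinate, the exponential to first order in $s$ is $\exp(sX)=(st,su,sc)+O(s^2)$, which is all that is required for a first-order derivative at $s=0$.

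For the Berezin symbol, I would substitute $g=(st,su,sc)$ into the formula for $S(\pi(g))(z)$ from Proposition \ref{propSpi}, namely
\begin{equation*}
S(\pi(g))(z)=\chi(t)e^{i\lambda c_0}\exp\left(\tfrac{\lambda}{2}\bar z_0 z+\tfrac{\lambda}{2}\bar z\,(t^{-1}\cdot(z-z_0))-\tfrac{\lambda}{2}|z|^2-\tfrac{\lambda}{4}|z_0|^2\right),
\end{equation*}
and differentiate at $s=0$. The prefactor $\chi(st)$ contributes $d\chi(t)$, the central term $e^{is\lambda c}$ contributes $i\lambda c$, and the terms $\tfrac{\lambda}{2}\bar z_0 z$ and $\tfrac{\lambda}{2}\bar z\,(t^{-1}\!\cdot(z-z_0))$ with $z_0=su$ and $t^{-1}$ replaced by $(st)^{-1}=A(st)^{-1}=\Diag(e^{-is\alpha_k(t)})$ yield the remaining terms: the linear-in-$u$ part gives $\tfrac{\lambda}{2}(\bar u z-\bar z u)$ after noting the $O(s^2)$ quadratic pieces drop out, while differentiating $A(st)^{-1}=\Diag(e^{-is\alpha_k(t)})$ at $s=0$ produces the factor $-i\alpha_k(t)$, giving $-\tfrac{\lambda}{2}i\bar z(\alpha(t)z)$. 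The $-\tfrac{\lambda}{2}|z|^2$ term is $s$-independent and contributes nothing.

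For the complex Weyl symbol I would proceed identically with the formula of Proposition \ref{propWpi}, but here lies the one genuine subtlety. That formula carries the factor $\Det(I_n+A(t^{-1}))^{-1}=\prod_k(1+e^{-i\alpha_k(t)})^{-1}$ and the argument $g$ must satisfy $\alpha_k(t)\notin\pi+2\pi\mathbb Z$; for $g=\exp(sX)$ with small $s$ this holds automatically near $s=0$, so differentiation at $s=0$ is legitimate. The main obstacle will be bookkeeping the several $s$-dependent factors in the product-form expression simultaneously: the prefactor $\prod_k(1+e^{-is\alpha_k(t)})^{-1}$ differentiates to $\tfrac{1}{2}\sum_k i\alpha_k(t)$ at $s=0$ (since each factor is $\tfrac12$ there and the logarithmic derivative of $(1+e^{-is\alpha_k})^{-1}$ is $\tfrac{i\alpha_k e^{-is\alpha_k}}{1+e^{-is\alpha_k}}\to \tfrac{i\alpha_k}{2}$), while the exponential $\exp(\tfrac{\lambda}{2}\sum_k(1+e^{is\alpha_k(t)})^{-1}|e^{-is\alpha_k(t)}a_k+2z_k|^2)$ with $a_k=su_k$ must be expanded to first order, combining the derivative of the coefficient $(1+e^{is\alpha_k})^{-1}$ (which gives $-\tfrac{i\alpha_k}{4}\cdot 4|z_k|^2=-i\alpha_k|z_k|^2$ after multiplying by $\tfrac{\lambda}{2}$) with the derivative of the inner modulus-squared (which supplies the $\tfrac{\lambda}{2}(\bar u z-\bar z u)$ term). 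Collecting the $|z_k|^2$ contributions, the $\tfrac{1}{2}i\sum_k\alpha_k(t)$ from the determinant and the $-i\lambda|z_k|^2$ from the exponent combine into $\tfrac{1}{2}i\sum_k\alpha_k(t)(1-\lambda|z_k|^2)$, which is exactly the claimed extra term distinguishing $W_0$ from $S$. I would organize the computation by isolating the $|z_k|^2$-terms from the cross-terms to avoid sign errors, as this is where the two symbols genuinely diverge and where the arithmetic is most error-prone.
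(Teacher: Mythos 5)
Your approach is exactly the paper's: its entire proof of Proposition \ref{propWdpi} is the one-line remark that the formulas follow from Propositions \ref{propSpi} and \ref{propWpi} ``by differentiation,'' and your plan --- differentiate $S(\pi(\exp(sX)))$ and $W_0(\pi(\exp(sX)))$ at $s=0$ using $\exp(sX)=(st,su,sc)+O(s^2)$ (confirmed exactly by Lemma \ref{lemexp}: $z_k(s)=su_k+O(s^2)$, $c(s)=sc+O(s^3)$) --- is the natural implementation of that remark. Your treatment of the Berezin symbol is correct as written, and your observations that both symbols equal $1$ at $s=0$ (so the log-derivative bookkeeping is clean) and that the hypothesis $\alpha_k(st)\notin\pi+2\pi\mathbb Z$ holds automatically for small $s$ are exactly the points that need checking.

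There are, however, two bookkeeping slips in your $W_0$ accounting, precisely at the spot you flagged as error-prone. First, the derivative at $s=0$ of the inner modulus squared $\vert e^{-is\alpha_k(t)}su_k+2z_k\vert^2$ supplies the \emph{symmetric} term $\tfrac{\lambda}{2}(u\bar z+\bar u z)$ (after the coefficient $\tfrac{1}{2}$ and the overall $\tfrac{\lambda}{2}$), not the antisymmetric $\tfrac{\lambda}{2}(\bar u z-\bar z u)$ you attribute to it. The antisymmetry only appears once you also differentiate the prefactor $\exp\left(-\lambda\,((st)^{-1}\cdot(su))\bar z\right)$ from Proposition \ref{propWpi}, which your sketch omits entirely and which contributes $-\lambda u\bar z$; then $-\lambda u\bar z+\tfrac{\lambda}{2}(u\bar z+\bar u z)=\tfrac{\lambda}{2}(\bar u z-\bar z u)$ as required. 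Second, the coefficient derivative gives $\tfrac{\lambda}{2}\cdot\bigl(-\tfrac{i}{4}\alpha_k(t)\bigr)\cdot 4\vert z_k\vert^2=-\tfrac{i\lambda}{2}\alpha_k(t)\vert z_k\vert^2$, not the ``$-i\lambda\vert z_k\vert^2$'' in your collecting sentence (which drops both the $\tfrac12$ and the $\alpha_k(t)$); it is $-\tfrac{i\lambda}{2}\alpha_k(t)\vert z_k\vert^2$ that combines with the $\tfrac{i}{2}\sum_k\alpha_k(t)$ from the determinant factor to give $\tfrac{i}{2}\sum_k\alpha_k(t)(1-\lambda\vert z_k\vert^2)$. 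With these two corrections the computation closes and reproduces both stated formulas; the method itself is sound and coincides with the paper's.
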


\section{Stratonovich-Weyl correspondence for $G$} \label{sec:7}
In this section, we use covariance of $W_0$ in order to connect $\pi$ with some coadjoint orbit
of $G$. Then we interpret $W_0$ as a Stratonovich-Weyl correspondence for $G$.

First, we introduce some additional notation. Let ${\mathfrak g}^{\star}$ be the dual of
${\mathfrak g}$. Let 
$s\in ( {{\mathbb R}^m})^{\ast}$ (the dual of ${{\mathbb R}^m}$), $v\in {\mathbb C}^n$ and $d\in {\mathbb R}$.
Then we denote by $\xi=(s,v,d)$ the element of ${\mathfrak g}^{\ast}$ defined as follows.
For each $X=(t,u,c)\in \mathfrak g$, we have
\begin{equation*}\langle \xi,X\rangle=\langle s,t\rangle+\omega((v,{\bar v}),(u,{\bar u}))
+cd. \end{equation*}

\begin{proposition}\label{proppsi} \begin{enumerate}
\item There exists a map $\psi:{\mathbb C}^n\rightarrow {\mathfrak g}^{\ast}$ such that
\begin{equation*}W_0(d\pi(X))(z)=i\langle \psi (z), X\rangle\end{equation*}
for each $X\in  {\mathfrak g}$ and each $z\in {\mathbb C}^n$. Then we have
\begin{equation*}\psi(g\cdot z)=\Ad^{\ast}(g)\,\psi(z)\end{equation*}
for each $g\in G$ and each $z\in {\mathbb C}^n$;
\item  For each $z\in {\mathbb C}^n$, we have
\begin{equation*}\psi (z)=\left(-id\chi+\tfrac{1}{2}\sum_{k=1}^n(1-\lambda \vert z_k\vert^2)\alpha_k, -\lambda z,\lambda\right).\end{equation*}
\item Moreover, $\psi$ is a bijection from ${\mathbb C}^n$ onto the orbit ${\mathcal O}(\xi_0
)$ of the element \begin{equation*}\xi_0:=\left(-id\chi+\tfrac{1}{2}\sum_{k=1}^n\alpha_k,0,\lambda \right)\in {\mathfrak g}^{\ast} \end{equation*} 
for the coadjoint action of $G$.
\end{enumerate}
\end{proposition}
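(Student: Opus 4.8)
The plan is to prove the three assertions in the order (2), (1), (3), since the explicit formula for $\psi$ furnishes the cleanest route through the others. First I would settle existence abstractly: for fixed $z$, the map $X\mapsto W_0(d\pi(X))(z)$ is $\mathbb R$-linear on $\mathfrak g$, and inspection of Proposition \ref{propWdpi} shows that it takes purely imaginary values. Indeed $\chi$ is unitary so $d\chi$ is imaginary, $i\lambda c$ is imaginary, $\bar u z-\bar z u=2i\,\Ima(\bar u z)$ is imaginary, and the $\alpha_k$-term carries an explicit factor $i$. Hence $X\mapsto \tfrac1i W_0(d\pi(X))(z)$ is a real linear functional on $\mathfrak g$ and defines a unique $\psi(z)\in\mathfrak g^{\ast}$ with $W_0(d\pi(X))(z)=i\langle\psi(z),X\rangle$. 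To obtain the closed form of (2), I would match this against the pairing $\langle\xi,X\rangle=\langle s,t\rangle+\omega((v,\bar v),(u,\bar u))+cd$, using $\omega((v,\bar v),(u,\bar u))=\tfrac{i}{2}(v\bar u-u\bar v)$. Writing $\psi(z)=(s(z),v(z),d(z))$ and comparing the three groups of terms in Proposition \ref{propWdpi} forces $d(z)=\lambda$, $v(z)=-\lambda z$, and $s(z)=-id\chi+\tfrac12\sum_{k=1}^n(1-\lambda|z_k|^2)\alpha_k$, which is exactly the asserted formula (note that $-id\chi$ is a genuine real form on $\mathbb R^m$).

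Next, for the equivariance in (1) I would invoke the covariance of $W_0$ (Proposition \ref{prop:covW0}) together with the intertwining relation $\pi(g)^{-1}d\pi(X)\pi(g)=d\pi(\Ad(g^{-1})X)$, obtained by differentiating $\pi(g^{-1}(\exp sX)g)=\pi(\exp(s\,\Ad(g^{-1})X))$. Applying covariance to $A=d\pi(X)$ gives $W_0(d\pi(\Ad(g^{-1})X))(z)=W_0(d\pi(X))(g\cdot z)$, that is $\langle\psi(z),\Ad(g^{-1})X\rangle=\langle\psi(g\cdot z),X\rangle$ for all $X\in\mathfrak g$. Since $\langle \Ad^{\ast}(g)\psi(z),X\rangle=\langle\psi(z),\Ad(g^{-1})X\rangle$ by the definition of the coadjoint action, non-degeneracy of the pairing yields $\psi(g\cdot z)=\Ad^{\ast}(g)\psi(z)$. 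The point that requires care here is matching the conventions so that no spurious inverse appears: one must confirm that $\Ad^{\ast}$ is normalized as a left action and that it is compatible with the left action $g\cdot z$ on $\mathbb C^n$.

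Finally, for (3) I first note that $\psi(0)=\xi_0$ reads off directly from (2). Injectivity of $\psi$ is immediate, since the middle component $v(z)=-\lambda z$ already determines $z$ because $\lambda>0$. For the surjectivity onto $\mathcal O(\xi_0)$, I observe that $(t,z_0,c_0)\cdot 0=z_0$, so $G$ acts transitively on $\mathbb C^n$ with $G\cdot 0=\mathbb C^n$; combining this with the equivariance from (1) gives $\psi(\mathbb C^n)=\psi(G\cdot 0)=\Ad^{\ast}(G)\psi(0)=\Ad^{\ast}(G)\xi_0=\mathcal O(\xi_0)$. Hence $\psi$ is a bijection of $\mathbb C^n$ onto the coadjoint orbit. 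I expect the main (though modest) obstacle to be the sign-and-conjugation bookkeeping in the identification step of (2) and the coadjoint-convention check in (1); the remaining arguments are essentially formal.
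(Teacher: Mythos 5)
Your proposal is correct and follows essentially the same route as the paper: existence of $\psi$ via the observation that $X\mapsto -iW_0(d\pi(X))(z)$ is real-valued (Proposition \ref{propWdpi}), equivariance via the covariance of $W_0$ combined with $\pi(g)^{-1}d\pi(X)\pi(g)=d\pi(\Ad(g)^{-1}X)$, and the explicit formula by matching against the pairing on ${\mathfrak g}^{\ast}$. Your detailed treatment of part (3) --- injectivity from the middle component $-\lambda z$ and surjectivity from transitivity of the $G$-action on ${\mathbb C}^n$ at the base point $0$ --- is exactly the fill-in the paper leaves to the reader with ``the rest follows easily.''
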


\begin{proof} First, we remark that for each $z\in {\mathbb C}^n$ the linear form defined on ${\mathfrak g}$ by
$X\rightarrow -iW_0(d\pi(X))(z)$ is real-valued by Proposition \ref{propWdpi}, hence it defines an element $\psi( z)$ in ${\mathfrak g}^{\ast}$.

By the covariance of $W_0$ with respect to $\pi$, for each $g\in G$, $X\in  {\mathfrak g}$ and $z\in {\mathbb C}^n$, we have 
\begin{align*}\langle \psi_(g\cdot z), X\rangle &=-iW(d\pi(X))(g\cdot z)\\
&=-iW(\pi(g)^{-1}d\pi(X)\pi(g))(z)\\
&=-iW(d\pi(\Ad(g)^{-1}X))(z)\\
&=\langle \psi(z), \Ad(g)^{-1}X\rangle\\
&= \langle \Ad^{\ast}(g)\psi (z), X\rangle
\end{align*} hence $ \psi(g\cdot z)=\Ad^{\ast}(g)\psi(z)$.
The rest of the proposition follows easily from Proposition \ref{propWdpi}.
\end{proof}

As a consequence of Proposition \ref{proppsi}, we can interpret our results in the context of Definition \ref{def1}.

Let $\nu_{\lambda}$ denote the measure $\psi_{\ast}(\mu_{\lambda})$ on ${\mathcal O}(\xi_0)$. Recall that ${\mathcal L}_2({\mathcal F}_{\lambda})$ denotes the space of all
Hilbert-Schmidt operators on ${\mathcal F}_{\lambda}$. 

\begin{proposition}\label{propSW} \begin{enumerate}
\item The map $W_0:{\mathcal
L}_2({\mathcal F}_{\lambda})\rightarrow L^2({\mathbb C}^n,\mu_{\lambda})$
is a Stratonovich-Weyl correspondence for the triple $( G,\pi,
{\mathbb C}^n)$;
\item The map $W'_0:{\mathcal
L}_2({\mathcal F}_{\lambda})\rightarrow L^2({\mathcal O}(\xi_0),\nu_{\lambda})$
defined by $W'_0(A)=W_0(A)\circ \psi^{-1}$
is a Stratonovich-Weyl correspondence for the triple $( G,\pi,
{\mathcal O}(\xi_0))$.
\end{enumerate}
\end{proposition}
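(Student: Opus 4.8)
The plan is to verify the four axioms of Definition \ref{def1} for $W_0$, checking them one by one, and then transport everything to $\mathcal{O}(\xi_0)$ via $\psi$ in part (2). The starting point is the already-established fact (stated just after Proposition \ref{prop:intW}) that $W_0:\mathcal{L}_2(\mathcal{F}_\lambda)\to L^2(\mathbb{C}^n,\mu_\lambda)$ is the unitary part in the polar decomposition of $S_\lambda$. Since $S_\lambda$ is one-to-one with dense range, its polar part $W_0$ is a unitary isomorphism of Hilbert spaces; this immediately gives that $W_0$ is the required isomorphism of operator space onto function space.

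For the axioms themselves I would proceed as follows. Property (1), that $W_0(\id)=1$, and property (2), that $W_0(A^\ast)=\overline{W_0(A)}$, follow directly from the integral formula \eqref{eq:intWsym} together with the definition $W_0(A)(z)=\Tr(A\,\Omega_0(z))$: one checks from \eqref{eq:om} that $\Omega_0(z)$ is self-adjoint and has trace $1$ (or unit normalization), so that $\Tr(\Omega_0(z))=1$ yields (1), and $\Omega_0(z)^\ast=\Omega_0(z)$ yields (2) by the elementary identity $\Tr(A^\ast\Omega_0(z))=\overline{\Tr(A\,\Omega_0(z)^\ast)}=\overline{\Tr(A\,\Omega_0(z))}$. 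Property (3), covariance, is exactly Proposition \ref{prop:covW0}, provided one matches the normalization of the $G$-action on $\mathbb{C}^n$ with the invariance of the measure $\mu_\lambda$; here I would note that the action $(t,z_0,c_0)\cdot z=t\cdot z+z_0$ preserves $\mu_\lambda$ because $t\cdot z=A(t)z$ with $A(t)$ unitary (hence $\Det A(t)$ has modulus $1$) and because Lebesgue measure is translation-invariant. Property (4), traciality, is the content of $W_0$ being the unitary part of $S_\lambda$: for Hilbert-Schmidt $A,B$ one has $\langle W_0(A),\overline{W_0(B)}\rangle_{L^2}=\Tr(AB)$, which is precisely the polarized form of the isometry statement once combined with property (2).

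For part (2), the map $W_0'(A)=W_0(A)\circ\psi^{-1}$ is well defined and is an isomorphism because, by Proposition \ref{proppsi}(3), $\psi$ is a bijection from $\mathbb{C}^n$ onto $\mathcal{O}(\xi_0)$, and $\nu_\lambda=\psi_\ast(\mu_\lambda)$ is defined precisely so that $\psi$ becomes a measure isomorphism; thus composition with $\psi^{-1}$ carries $L^2(\mathbb{C}^n,\mu_\lambda)$ isometrically onto $L^2(\mathcal{O}(\xi_0),\nu_\lambda)$. Axioms (1), (2), (4) for $W_0'$ then transfer verbatim from those for $W_0$, since they are pointwise or integral identities unchanged by a measure-preserving relabeling of the base. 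The covariance axiom (3) is the only one requiring the orbit structure: one must check that $W_0'(\pi(g)^{-1}A\pi(g))(\xi)=W_0'(A)(g^{-1}\cdot\xi)$ where the $G$-action on $\mathcal{O}(\xi_0)$ is the coadjoint action. This follows from the intertwining relation $\psi(g\cdot z)=\Ad^\ast(g)\psi(z)$ of Proposition \ref{proppsi}(1): substituting $\xi=\psi(z)$ and using covariance of $W_0$ (Proposition \ref{prop:covW0}) converts the $\mathbb{C}^n$-action into the coadjoint action on $\mathcal{O}(\xi_0)$.

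The main obstacle, as usual in this kind of argument, is the normalization and measure-invariance bookkeeping underlying traciality and the $G$-invariance of $\nu_\lambda$. The traciality constant must come out to exactly $\Tr(AB)$ with no stray factor, and this rests delicately on the $2^n$ prefactors in the definition of $\Omega_0$ and on the chosen normalization of $\mu_\lambda$; I would verify this by reducing to the already-cited statement that $W_0$ is the genuine unitary part of $S_\lambda$, rather than recomputing the Gaussian integral from scratch. The secondary subtlety is that $\mathcal{O}(\xi_0)$ inherits, via $\psi$, a $G$-invariant measure $\nu_\lambda$ in the sense of Definition \ref{def1}; its invariance is a direct consequence of the invariance of $\mu_\lambda$ under the $G$-action on $\mathbb{C}^n$ together with the equivariance $\psi(g\cdot z)=\Ad^\ast(g)\psi(z)$, so no independent computation of the Liouville measure on the orbit is needed.
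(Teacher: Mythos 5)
Your proposal is correct and follows essentially the same route as the paper, which proves (1) by combining the covariance of $W_0$ (Proposition \ref{prop:covW0}) with the unitarity of $W_0:{\mathcal L}_2({\mathcal F}_{\lambda})\rightarrow L^2({\mathbb C}^n,\mu_{\lambda})$ cited from \cite{CaTs}, and deduces (2) from (1) via the equivariant bijection $\psi$ of Proposition \ref{proppsi}. You merely expand what the paper compresses into a citation --- the axiom-by-axiom verification via the self-adjointness and normalization of $\Omega_0(z)$, the $G$-invariance of $\mu_{\lambda}$, and the polarization argument for traciality --- all of which is sound and consistent with the paper's intended argument.
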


\begin{proof}  (1) is a consequence of the covariance of $W_0$ with respect to $\pi$ and of the unitarity of  $W_0:{\mathcal
L}_2({\mathcal F}_{\lambda})\rightarrow L^2({\mathbb C},\mu_{\lambda})$, see  \cite{CaTs}.
(2) can be deduced from (1).
\end{proof}

\section{Schr\"odinger model for $\pi$} \label{sec:8}

The Schr\"odinger model for $\pi$ is the representation $\pi'$ of $G$ on $L^2({\mathbb R}^n)$ which is obtained by translating $\pi$ by means of the Bargmann transform $\mathcal B$, that is, $\pi'$ is defined by $\pi'(g)={\mathcal B}^{-1} \pi(g){\mathcal B}$ for each $g\in G$.
Then, by writing $g=(t,h)$ with $t\in {\mathbb R}^m$ and $h\in H_n$, we have
\begin{equation*}\pi'(g)={\mathcal B}^{-1}\rho_{\lambda}(h)\sigma(t){\mathcal B}=({\mathcal B}^{-1}\rho_{\lambda}(h){\mathcal B})({\mathcal B}^{-1}\sigma(t){\mathcal B})
=\rho '_{\lambda}(h)({\mathcal B}^{-1}\sigma(t){\mathcal B}).\end{equation*}
This leads us to consider $\sigma'(t):={\mathcal B}^{-1}\sigma(t){\mathcal B}$ for $t\in {\mathbb R}^m$. The aim of this section is to give explicit formulas for the kernel of $\sigma'(t)$ hence for the kernel of $\pi'(g)$.

For convenience we write ${\mathcal B}$ as
\begin{equation*}({\mathcal B}\phi)(z)=\int_{{\mathbb R}^n}B(z,x)\phi(x)\,dx\end{equation*} where
\begin{equation*}B(z,x):=\left(\tfrac{\lambda}{\pi}\right)^{n/4} \exp
\left(-\tfrac{\lambda}{4}z^2+\lambda zx-\tfrac{\lambda}{2}x^2\right).
\end{equation*}

Since ${\mathcal B}$ is unitary \cite{Fo}, we have
\begin{equation*}
\langle {\mathcal B}\phi, f\rangle_{{\mathcal F}_{\lambda}}=\langle \phi, {\mathcal B}^{-1}f\rangle_{L^2({\mathbb R}^n)}, \quad \phi \in L^2({\mathbb R}^n), f\in {\mathcal F}_{\lambda}.\end{equation*} This gives
\begin{equation*}({\mathcal B}^{-1}f)(x)=\int_{{\mathbb C}^n}\overline{B(z,x)}f(z) e^{-\lambda \vert
z\vert^2/2}\,d\mu_{\lambda}(z).\end{equation*}
Let us denote by $b'_t$ the kernel of $\sigma'(t)$ for $t\in {\mathbb R}^m$, that is, we have
\begin{equation*} (\sigma'(t)\phi)(x)=\int_{{\mathbb R}^n}b'_t(x,y)\phi(y)\,dy.\end{equation*}

\begin{proposition}\label{propbt} For each $t\in {\mathbb R}^m$ such that $\alpha_k(t)\notin \pi{\mathbb Z}$ for each $k=1,2,\ldots, n$, we have
\begin{align*}&b_t(x,y)=\left(\tfrac{\lambda}{\pi}\right)^{n/2}\chi(t) \Det(I_n-A(t^{-1})^2)^{-1/2}\exp \left(\tfrac{\lambda}{2}(x^2+y^2)\right )\\
&\times
\exp \left( \lambda (y(A(t^{-1})^2-I_n)^{-1}y-2xA(t^{-1})(A(t^{-1})^2-I_n)^{-1}y+x(A(t^{-1})^2-I_n)^{-1}x\right),
\end{align*}
or, equivalently,
\begin{align*}&b_t(x,y)=\left(\tfrac{\lambda}{\pi}\right)^{n/2}\chi(t)\prod_{k=1}^n(1-e^{-2i\alpha_k(t)})^{-1/2}\\ 
&\times
\exp \left(\tfrac{\lambda}{2}i\sum_{k=1}^n(\tan(\alpha_k(t)))^{-1}(x_k^2+y_k^2)
-\lambda i \sum_{k=1}^n(\sin(\alpha_k(t)))^{-1}x_ky_k \right).
\end{align*}
\end{proposition}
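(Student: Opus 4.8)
The plan is to compute the kernel $b'_t$ of $\sigma'(t)={\mathcal B}^{-1}\sigma(t){\mathcal B}$ directly, by writing it as a single Gaussian integral over ${\mathbb C}^n$ and then invoking Lemma \ref{lemgauss}. First I would insert the integral expressions for ${\mathcal B}^{-1}$ and ${\mathcal B}$ together with Proposition \ref{proppi} in the form $(\sigma(t)f)(z)=\chi(t)f(A(t^{-1})z)$, obtaining
\begin{equation*}b'_t(x,y)=\chi(t)\int_{{\mathbb C}^n}\overline{B(z,x)}\,B(A(t^{-1})z,y)\,e^{-\lambda\vert z\vert^2/2}\,d\mu_{\lambda}(z).\end{equation*}
Expanding the three exponentials and using that $x,y$ are real, the integrand factors into an $x,y$-dependent term $\exp(-\tfrac{\lambda}{2}(x^2+y^2))$, a quadratic part in $z,{\bar z}$, and a linear part in $z,{\bar z}$, so the integral is of the type treated in Lemma \ref{lemgauss}.

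The next step is to read off the data of the lemma. The quadratic part gives $A=\tfrac{\lambda}{4}A(t^{-1})^2$ and $D=B=\tfrac{\lambda}{4}I_n$, so that $M=\tfrac{\lambda}{4}\bigl(\begin{smallmatrix}A(t^{-1})^2&I_n\\I_n&I_n\end{smallmatrix}\bigr)$, while the linear part gives $u=\lambda A(t^{-1})y$ (coefficient of $z$) and $v=\lambda x$ (coefficient of ${\bar z}$). The one genuine hypothesis-checking step, and the place I expect to need the most care, is the positivity condition $\Rea(N)>0$ with $N=U^tMU$: since all blocks of $M$ are diagonal and hence commute, $\Rea(N)$ decouples into $2\times 2$ diagonal blocks whose determinants work out to $4\sin^2\alpha_k(t)$. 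Thus $\Rea(N)$ is positive definite exactly when $\alpha_k(t)\notin\pi{\mathbb Z}$ for every $k$, which is precisely the stated hypothesis; this is what pins down the domain of validity of the formula and governs the convergence of the integral.

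With the lemma applicable, I would compute $\Det(N)=(\lambda^2/4)^n\,\Det(I_n-A(t^{-1})^2)$ using $\Det N=(\Det U)^2\Det M$ and the commuting-block determinant formula, together with
\begin{equation*}M^{-1}=\tfrac{4}{\lambda}\,(A(t^{-1})^2-I_n)^{-1}\bigl(\begin{smallmatrix}I_n&-I_n\\-I_n&A(t^{-1})^2\end{smallmatrix}\bigr).\end{equation*}
Collecting the prefactor $(\lambda/\pi)^{n/2}$ coming from the two Bargmann kernels, the measure constant $(2\pi)^{-n}\lambda^n$, and the factor $\pi^n(\Det N)^{-1/2}$ from the lemma, all powers of $2$ and $\pi$ cancel and one is left with the announced constant $(\lambda/\pi)^{n/2}\chi(t)\Det(I_n-A(t^{-1})^2)^{-1/2}$; here one must keep the principal determination of the square root fixed throughout (as declared before Lemma \ref{lemgauss}) so that $\Det(I_n-A(t^{-1})^2)^{-1/2}$ matches $\prod_k(1-e^{-2i\alpha_k(t)})^{-1/2}$.

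Finally, evaluating $\tfrac14\begin{pmatrix}u&v\end{pmatrix}M^{-1}\begin{pmatrix}u\\v\end{pmatrix}$ with the above $u,v$ produces the quadratic form $\lambda\bigl(y\,A(t^{-1})^2(A(t^{-1})^2-I_n)^{-1}y-2x\,A(t^{-1})(A(t^{-1})^2-I_n)^{-1}y+x\,A(t^{-1})^2(A(t^{-1})^2-I_n)^{-1}x\bigr)$. The simplifying identity $A(t^{-1})^2(A(t^{-1})^2-I_n)^{-1}=I_n+(A(t^{-1})^2-I_n)^{-1}$ lets the diagonal $x^2$ and $y^2$ contributions absorb the external $\exp(-\tfrac{\lambda}{2}(x^2+y^2))$, turning it into $\exp(+\tfrac{\lambda}{2}(x^2+y^2))$ and yielding exactly the first displayed formula. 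The equivalent second formula is then obtained by passing to coordinates, where the diagonal entries reduce to the elementary identities $\tfrac12+(e^{-2i\alpha}-1)^{-1}=\tfrac{i}{2}\cot\alpha$ and $e^{-i\alpha}(e^{-2i\alpha}-1)^{-1}=\tfrac{i}{2\sin\alpha}$, producing the $\cot$ and $(\sin)^{-1}$ coefficients. The computation is routine once the lemma is set up; the only conceptual points are the identification of $\Rea(N)$-positivity with the hypothesis and the consistent choice of branch for the determinant factor.
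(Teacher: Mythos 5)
Your proposal is correct and follows essentially the same route as the paper's proof: the same reduction of $b_t(x,y)$ to the Gaussian integral $\chi(t)\int_{{\mathbb C}^n}\overline{B(z,x)}\,B(t^{-1}\cdot z,y)\,e^{-\lambda\vert z\vert^2/2}\,d\mu_{\lambda}(z)$, the same data for Lemma \ref{lemgauss}, namely $M=\tfrac{\lambda}{4}\bigl(\begin{smallmatrix}A(t^{-1})^2&I_n\\I_n&I_n\end{smallmatrix}\bigr)$, $u=\lambda A(t^{-1})y$, $v=\lambda x$, the same positivity check via $2\times 2$ blocks (your determinant $4\sin^2\alpha_k(t)$ is the paper's $2(1-\cos(2\alpha_k(t)))$), and the same evaluation of $\Det N=(\lambda/2)^{2n}\Det(I_n-A(t^{-1})^2)$ and of the quadratic form $\tfrac14\begin{pmatrix}u&v\end{pmatrix}M^{-1}\begin{pmatrix}u\\v\end{pmatrix}$. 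The only cosmetic differences are your factored form of $M^{-1}$ and your writing $A(t^{-1})^2(A(t^{-1})^2-I_n)^{-1}$ where the paper uses the equivalent $I_n+(A(t^{-1})^2-I_n)^{-1}$ before absorbing the external factor $\exp\left(-\tfrac{\lambda}{2}(x^2+y^2)\right)$.
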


\begin{proof} Let $\phi \in {\mathbb R}^n$.  Then we have
\begin{equation*} (\sigma(t){\mathcal B}\phi)(z)=\chi(t)({\mathcal B}\phi)(t^{-1}\cdot z)
=\chi(t)\,\int_{{\mathbb R}^n}B(t^{-1}\cdot z,y)\phi(y)\,dy\end{equation*}
hence
\begin{align*}
({\mathcal B}^{-1}\sigma(t){\mathcal B}&\phi)(x)=
\int_{{\mathbb C}^n}\overline{B(z,x)}(\sigma(t){\mathcal B}\phi)(z) e^{-\lambda \vert
z\vert^2/2}\,d\mu_{\lambda}(z)\\
&=\chi(t)\, \int_{{\mathbb R}^n}\int_{{\mathbb C}^n}\overline{B(z,x)}B(t^{-1}\cdot z,y)\phi(y) e^{-\lambda \vert
z\vert^2/2}dy\,d\mu_{\lambda}(z).
\end{align*} 
This gives
\begin{equation*}b_t(x,y)=\chi(t)\, \int_{{\mathbb C}^n}\overline{B(z,x)}B(t^{-1}\cdot z,y)\phi(y) e^{-\lambda \vert
z\vert^2/2}\,d\mu_{\lambda}(z).\end{equation*}
Equivalently, introducing the integral
\begin{equation*}I_t(x,y):=\int_{{\mathbb C}^n}\exp \left( -\tfrac{\lambda}{4}((t^{-1}\cdot z)^2+{\bar z}^2)-\tfrac{\lambda}{2}\vert z\vert^2+\lambda ({\bar z}x+(t^{-1}\cdot z)y)\right)
dm(z),\end{equation*} we can write
\begin{equation*}b_t(x,y)=\left(\tfrac{\lambda}{\pi}\right)^{n/2}\left(\tfrac{\lambda}{2\pi}\right)^{n}\chi(t) \exp \left(-\tfrac{\lambda}{2}(x^2+y^2)\right)\,I_t(x,y).
\end{equation*}
The rest of the proof consists in computing $I_t(x,y)$ by using Lemma \ref{lemgauss}.
With the notation as in the lemma, we take
\begin{equation*}M=\begin{pmatrix} A&B\\B^t&D\end{pmatrix}=\tfrac{\lambda}{4}
\begin{pmatrix} A(t^{-1})^2&I_n\\I_n&I_n\end{pmatrix}
\end{equation*} and $u=\lambda t^{-1}\cdot y, v=\lambda x$. Indeed, with this choice,
we have
\begin{equation*}z(Az)=\tfrac{\lambda}{4}( t^{-1}\cdot z)^2;\quad {\bar z}(D{\bar z})=
\tfrac{\lambda}{4}{\bar z}^2;\quad 2{\bar z}(Bz)=\tfrac{\lambda}{2}z{\bar z}.
\end{equation*} 
We have to verify that the matrice $\Rea(N)$ which is here equal to
\begin{equation*}\tfrac{\lambda}{4}\begin{pmatrix}3I_n+ \Diag( \cos(2\alpha_1(t)),\ldots,\cos(2\alpha_n(t)))&\Diag(\sin(2\alpha_1(t)),\ldots, \sin(2\alpha_n(t)))\\\Diag(\sin(2\alpha_1(t)),\ldots, \sin(2\alpha_n(t)))&I_n-\Diag( \cos(2\alpha_1(t)),\ldots,
\cos(2\alpha_n(t)))\end{pmatrix}
\end{equation*} is positive define. Since the associated quadratic form is
\begin{align*}\begin{pmatrix}x&y\end{pmatrix}&\Rea(N)\begin{pmatrix}
x\\y\end{pmatrix}\\=& \tfrac{\lambda}{4}\sum_{k=1}^n\left((3+\cos(2\alpha_k(t)))x_k^2+
2\sin(2\alpha_k(t))x_ky_k+(1-\cos(2\alpha_k(t)))y_k^2\right),
\end{align*} it is sufficient to consider the case $n=1$. But for each $k=1,2,\ldots,n$, the matrix
\begin{equation*} \begin{pmatrix}3+\cos(2\alpha_k(t))&\sin(2\alpha_k(t))\\
\sin(2\alpha_k(t))&1-\cos(2\alpha_k(t))\end{pmatrix}
\end{equation*} is clearly positive definite because firstly we have $3+\cos(2\alpha_k(t))>0$
and secondly it has determinant $2(1-\cos(2\alpha_k(t)))$ which is positive under the hypothesis that $\alpha_k(t)\notin \pi{\mathbb Z}$ for each $k=1,2,\ldots,n$.

Moreover, we have that 
\begin{equation*} M^{-1}=\tfrac{4}{\lambda}\begin{pmatrix}(A(t^{-1})^2-I_n)^{-1}&-(A(t^{-1})^2-I_n)^{-1}\\-(A(t^{-1})^2-I_n)^{-1}&I_n+(A(t^{-1})^2-I_n)^{-1}
\end{pmatrix},
\end{equation*} then
\begin{align*}\tfrac{1}{4}&\begin{pmatrix}u&v\end{pmatrix}M^{-1}\begin{pmatrix}u\\v\end{pmatrix}\\
&=\lambda \left(x^2+x(A(t^{-1})^2-I_n)^{-1}x+y^2+y(A(t^{-1})^2-I_n)^{-1}y-2xA(t^{-1})
(A(t^{-1})^2-I_n)^{-1}y\right).\end{align*}
On the other hand, we also have
\begin{equation*}\Det(N)=\left(\tfrac{\lambda}{2}\right)^{2n}\Det(I_n-A(t^{-1})^2).
\end{equation*}
We are then in position to apply Lemma \ref{lemgauss}. The result hence follows.
\end{proof}

Let $A$ be an operator on ${\mathcal F}_{\lambda}$ and let $A'={\mathcal B}^{-1}A
{\mathcal B}$. Denote the kernel of $A$ ky $k_A(z,w)$ and the kernel of $A'$ by $K_{A'}(x,y)$.
Then the holomorphic function $k_A(z,{\bar w})$ is the $2n$-dimensional Bargmann transform
of $K_A$ \cite[Proposition 1.81]{Fo}. This fact can be used to provide another proof of Proposition \ref{propbt} which is more complicated than the proof given above. On the other hand, we easily deduce from Proposition \ref{propbt} the following result.

\begin{corollary}\label{corbt} Let $g=(t, a+ib,c)\in G$. Then the kernel of $\pi'(g)$ is
\begin{equation*} K_{\pi'(g)}(x,y)=\exp \left(i\lambda (c-bx+\tfrac{1}{2}ab)\right)\,b_t(x-a,y).\end{equation*}
\end{corollary}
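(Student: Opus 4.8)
The plan is to start from the definition $\pi'(g) = \mathcal{B}^{-1}\pi(g)\mathcal{B}$ and use the factorization established at the beginning of Section~\ref{sec:8}. Writing $g = (t, h)$ with $h = (a+ib, c) \in H_n$, we have $\pi'(g) = \rho'_{\lambda}(h)\,\sigma'(t)$, where $\sigma'(t) = \mathcal{B}^{-1}\sigma(t)\mathcal{B}$ has kernel $b_t(x,y)$ computed in Proposition~\ref{propbt}. The task is therefore to compose the Schr\"odinger operator $\rho'_{\lambda}(h)$ with the integral operator $\sigma'(t)$ and read off the resulting kernel.

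First I would recall from Section~\ref{sec:3} the explicit action
\begin{equation*}(\rho'_{\lambda}(a+ib,c)\phi)(x) = \exp\left(i\lambda(c - bx + \tfrac{1}{2}ab)\right)\,\phi(x-a).\end{equation*}
Applying this to $\psi = \sigma'(t)\phi$ gives
\begin{equation*}(\pi'(g)\phi)(x) = \exp\left(i\lambda(c - bx + \tfrac{1}{2}ab)\right)\,(\sigma'(t)\phi)(x-a).\end{equation*}
Next I would substitute the integral representation $(\sigma'(t)\phi)(x-a) = \int_{\mathbb{R}^n} b_t(x-a, y)\,\phi(y)\,dy$, which simply shifts the first argument of the kernel by $-a$. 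Since the prefactor $\exp(i\lambda(c - bx + \tfrac{1}{2}ab))$ does not depend on the integration variable $y$, it pulls inside the integral, and comparing the result with the defining relation $(\pi'(g)\phi)(x) = \int_{\mathbb{R}^n} K_{\pi'(g)}(x,y)\,\phi(y)\,dy$ yields exactly
\begin{equation*}K_{\pi'(g)}(x,y) = \exp\left(i\lambda(c - bx + \tfrac{1}{2}ab)\right)\,b_t(x-a, y),\end{equation*}
which is the claimed formula.

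This argument is essentially bookkeeping, so there is no substantial obstacle; the only point demanding a little care is confirming that the scalar phase factor from $\rho'_{\lambda}(h)$ and the shift $x \mapsto x-a$ interact with the kernel of $\sigma'(t)$ exactly as written, with no stray contribution mixing into the $y$-dependence. The heavy analytic work—the Gaussian integral evaluation via Lemma~\ref{lemgauss}, the positivity of $\Rea(N)$, and the inversion of the matrix $M$—has already been carried out in Proposition~\ref{propbt}, so here I would invoke that proposition directly and present the composition as a short deduction rather than a fresh computation.
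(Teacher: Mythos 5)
Your proof is correct and follows exactly the route the paper intends: the factorization $\pi'(g)=\rho'_{\lambda}(h)\,\sigma'(t)$ established at the start of Section~\ref{sec:8}, the explicit Schr\"odinger action of $\rho'_{\lambda}$ from Section~\ref{sec:3}, and the kernel $b_t$ from Proposition~\ref{propbt}, composed to read off $K_{\pi'(g)}$. The paper states the corollary as an immediate consequence of Proposition~\ref{propbt} without spelling out this bookkeeping, so your deduction is precisely its implicit proof.
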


The formulas given in Proposition \ref{propbt} and Corollary \ref{corbt} are analogous to the so-called Mehler formula, see \cite{CR1, Fo} and for recent developments, see \cite{PS}.

Let $j:{\mathbb R}^{2n}\rightarrow {\mathbb C}^n$ defined by $j(x,y)=x+iy$.
Let $\mu_{\lambda}'$ be the Lebesgue measure on ${\mathbb R}^{2n}$ normalized as
$j_{\ast}(\mu_{\lambda}')=\mu_{\lambda}$. Then, from Proposition \ref{propSW},
we immediatly obtain the following result.
  
\begin{proposition}\label{propSWbis} \begin{enumerate}
\item The map $W_1:{\mathcal
L}_2(L^2({\mathbb R}^{n}))\rightarrow L^2({\mathbb R}^{2n},\mu_{\lambda}')$
is a Stratonovich-Weyl correspondence for the triple $( G,\pi',
{\mathbb R}^{2n})$;
\item The map $W'_1:{\mathcal
L}_2(L^2({\mathbb R}^{n}))\rightarrow L^2({\mathcal O}(\xi_0),\nu_{\lambda})$
defined by $W'_1(A)=W_1(A)\circ (\psi \circ j)^{-1}$
is a Stratonovich-Weyl correspondence for the triple $( G,\pi',
{\mathcal O}(\xi_0))$.
\end{enumerate}
\end{proposition}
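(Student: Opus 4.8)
The plan is to transport the Stratonovich--Weyl structure already established for $W_0$ in Proposition \ref{propSW} to the Schr\"odinger picture by means of the Bargmann transform, exactly as in the passage carried out in Section \ref{sec:3}. The starting point is the relation $W_1(A)(a,b)=W_0(\mathcal B A \mathcal B^{-1})(a+ib)$ recalled there, which, written with $j(a,b)=a+ib$, reads $W_1(A)=W_0(\mathcal B A \mathcal B^{-1})\circ j$ and extends from trace class to Hilbert--Schmidt operators. I would factor $W_1$ as the composition of three maps: the conjugation $\Phi:A\mapsto \mathcal B A \mathcal B^{-1}$, then $W_0$, then composition with $j$. Since $\mathcal B$ is unitary, $\Phi$ is an isometric isomorphism of ${\mathcal L}_2(L^2({\mathbb R}^n))$ onto ${\mathcal L}_2({\mathcal F}_{\lambda})$ satisfying $\Phi(A^{\ast})=\Phi(A)^{\ast}$, $\Phi(\id)=\id$, $\Tr\Phi(A)=\Tr A$ and $\Phi(\pi'(g))=\pi(g)$; and since $j_{\ast}(\mu_{\lambda}')=\mu_{\lambda}$, the map $f\mapsto f\circ j$ is a unitary isomorphism $L^2({\mathbb C}^n,\mu_{\lambda})\to L^2({\mathbb R}^{2n},\mu_{\lambda}')$. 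As all three factors are isomorphisms, $W_1$ is an isomorphism onto $L^2({\mathbb R}^{2n},\mu_{\lambda}')$.

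For part (1) I would then check the four axioms of Definition \ref{def1} by pulling each back to the corresponding property of $W_0$ from Proposition \ref{propSW}(1). The normalization axiom follows from $\Phi(\id)=\id$ and $W_0(\id)=1$; the reality axiom from $\Phi(A^{\ast})=\Phi(A)^{\ast}$ together with axiom (2) for $W_0$. For covariance, I would use $\mathcal B\pi'(g)=\pi(g)\mathcal B$, which gives $\Phi(\pi'(g)A\pi'(g)^{-1})=\pi(g)\Phi(A)\pi(g)^{-1}$; applying covariance of $W_0$ and the $G$-equivariance of $j$ (the action on ${\mathbb R}^{2n}$ being the one transported from ${\mathbb C}^n$, so that $g^{-1}\cdot j(a,b)=j(g^{-1}\cdot(a,b))$) yields $W_1(\pi'(g)A\pi'(g)^{-1})(a,b)=W_1(A)(g^{-1}\cdot(a,b))$. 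For traciality, the change of variables $j_{\ast}(\mu_{\lambda}')=\mu_{\lambda}$ converts the integral over ${\mathbb R}^{2n}$ into $\int_{{\mathbb C}^n}W_0(\Phi(A))\,W_0(\Phi(B))\,d\mu_{\lambda}=\Tr(\Phi(A)\Phi(B))$, and cyclicity of the trace gives $\Tr(\Phi(A)\Phi(B))=\Tr(AB)$.

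For part (2) I would observe that $W'_1(A)=W_1(A)\circ(\psi\circ j)^{-1}=W_0(\Phi(A))\circ j\circ j^{-1}\circ\psi^{-1}=W_0(\Phi(A))\circ\psi^{-1}=W'_0(\Phi(A))$. Thus $W'_1=W'_0\circ\Phi$, and the same transport argument now applied to $W'_0$ (Proposition \ref{propSW}(2)) gives the claim; the measures match because $(\psi\circ j)_{\ast}(\mu_{\lambda}')=\psi_{\ast}(\mu_{\lambda})=\nu_{\lambda}$, so traciality again reduces to that of $W'_0$. In both parts the serious analytic work---unitarity of $W_0$ and the four axioms for the Fock model---has already been done, so no new estimate is needed.

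The only points requiring genuine care, rather than routine bookkeeping, are the identification of the $G$-action on ${\mathbb R}^{2n}$ (ensuring it is precisely the pull-back through $j$ of the action $(t,z_0,c_0)\cdot z=t\cdot z+z_0$ on ${\mathbb C}^n$, which is what makes $j$ equivariant and the covariance axiom transfer) and the consistent normalization of the three measures $\mu_{\lambda}$, $\mu_{\lambda}'$, $\nu_{\lambda}$ under the pushforwards by $j$ and $\psi$. Once these identifications are fixed, every axiom transfers verbatim, which is why the result is obtained immediately from Proposition \ref{propSW}.
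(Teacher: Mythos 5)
Your proposal is correct and follows exactly the route the paper intends: the paper gives no written proof, stating only that the result is ``immediately obtained'' from Proposition \ref{propSW} via the relation $W_1(A)=W_0(\mathcal{B}A\mathcal{B}^{-1})\circ j$ of Section \ref{sec:3} and the normalization $j_{\ast}(\mu_{\lambda}')=\mu_{\lambda}$, which is precisely the transport argument you carry out in detail. Your explicit attention to the $G$-equivariance of $j$ and the measure normalizations simply makes visible the bookkeeping the paper leaves implicit.
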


\section{Applications to Star products} \label{sec:9}

\subsection{Generalities} \label{sub:91}

We begin by introducing two associative products with $W_0$ and $W_1$
and we compare them to the Moyal product \cite{GBliv}. The Moyal product can be introduced as follows. First, we recall that $\mathcal W$ can be extended to polynomials \cite{Ho1}. More precisely,
if $f(x,y)=p(x)y^{s}$ where $p$ is a polynomial on ${\mathbb R}^n$ then we have
\begin{equation*}\label{eq:Wpoly} ({\mathcal W}(f)\phi )(x)=\left(i\frac {\partial }{ \partial
y}\right)^{s} \left( p(x+\tfrac{1}{2}y)\,\varphi (x+y) \right) \Bigl
\vert _{y=0},\end{equation*}  see for instance \cite{Vo}. 
Consequently, if $f$ is a polynomial then ${\mathcal W}(f)$ is a differential operator with polynomial coefficients. We can verify that ${\mathcal W}$ induces a bijection between the space of all polynomials on ${\mathbb R}^{2n}$ and the space of all differential operators on   ${\mathbb R}^{n}$ with polynomial coefficients. The Moyal product $\ast_M$ is then defined by  \begin{equation}\label{eq:M}{\mathcal W}(f\ast_M g)={\mathcal W}(f){\mathcal W}(g)\end{equation} for each polynomials $f, g$ on ${\mathbb R}^{2n}$. 

It is also known that
we can obtain an expansion of $f\ast_M g$ as follows.
Let
$u=(x,y)\in  {\mathbb R}^{n}\times  {\mathbb R}^{n}$. Then we have $u_i=x_i$ for $1\leq i \leq n$ and $u_i=y_{i-n}$ for $n+1\leq i\leq 2n$. For $f,g$ polynomials on ${\mathbb R}^{2n}$, define $P^0(f,g):=fg$,
\begin{equation*}P^1(f,g):=\sum_{k=1}^n\left( \frac{\partial f}{\partial x_k}
\frac{\partial g}{\partial y_k}-\frac{\partial f}{\partial y_k}
\frac{\partial g}{\partial x_k}\right)=\sum_{1\leq i,j\leq n}\Lambda^{ij}{\partial_{u_i}}f
{\partial_{u_j}}g \end{equation*}
(the Poisson brackets) and, more generally, for $l\geq 2$,
\begin{equation*}P^l(f,g):=\sum_{1\leq i_1,\ldots, i_l,j_1,\ldots,j_l\leq n}
\Lambda^{i_1j_1}\Lambda^{i_2j_2}\cdots \Lambda^{i_lj_l}\partial^l_{u_{i_1}\ldots u_{i_l}}f \,\partial^l_{u_{j_1}\ldots u_{j_l}}g.\end{equation*}
Then we have
\begin{equation} \label{eq:expM}
f \ast_M g:=\sum_{l\geq 0}\frac{1}{l!}\left(-\frac{i}{2}\right)^lP^l(f,g)\end{equation}
for each polynomials $f,g$ on ${\mathbb R}^{2n}$.

Note that we can use Equation \ref{eq:expM} as well as Equation \ref{eq:M} to extend $\ast_M$ to functions in $C^{\infty}({\mathbb R}^{2n})$ which are not necessarily polynomials
\cite{Vo}.

Similarly, we can define an associative product $\ast_1$ on functions on ${\mathbb R}^{2n}$
via
\begin{equation*}{W}_1^{-1}(f\ast_1 g)={W}_1^{-1}(f){W}_1^{-1}(g).\end{equation*}

For each function $f$ on ${\mathbb R}^{2n}$, we define the functions $f_{\lambda}$ and
$f^{\lambda}$ by $f_{\lambda}(x,y):=f(x,\lambda y)$ and $f^{\lambda}(x,y):=f(x,\tfrac{1}{\lambda}y)$.

Recall that ${\mathcal W}(f)=W_1^{-1}(f_{\lambda})$ for each (suitable) function $f$ on 
${\mathbb R}^{2n}$, see \cite{CaJLT, GBliv} and also Section \ref{sec:3}. This implies that
\begin{equation*}W_1^{-1}(f)W_1^{-1}(g)={\mathcal W}(f^{\lambda})
{\mathcal W}(g^{\lambda})={\mathcal W}(f^{\lambda}\ast_M g^{\lambda})=
W_1^{-1}((f^{\lambda}\ast_M g^{\lambda})_{\lambda})\end{equation*}
hence we have  $f\ast_1 g=(f^{\lambda}\ast_M g^{\lambda})_{\lambda}$ and 
Equation \ref{eq:expM} leads to the expansion
\begin{equation*} 
f \ast_1 g:=\sum_{l\geq 0}\frac{1}{l!}\left(-\frac{i}{2\lambda}\right)^lP^l(f,g).\end{equation*}

We can also consider the associative product $\ast_0$ associated with $W_0$ via
\begin{equation*}{W}_0^{-1}(f\ast_0 g)={W}_0^{-1}(f){W}_0^{-1}(g)\end{equation*}
for $f,g$ functions on ${\mathbb C}^n$.

Recall that $j:{\mathbb R}^{2n}\rightarrow {\mathbb C}^n$ is defined by $j(x,y)=x+iy$.
From the property \begin{equation*}W_1(A)=W_0({\mathcal B}A
{\mathcal B}^{-1})\circ j \end{equation*} for $A$ operator on $L^2({\mathbb R}^{n})$ (see Section \ref{sec:3}), we deduce that
\begin{equation*}f\ast_0 g=((f\circ j)\ast_1 (g\circ j))\circ j^{-1}=
\sum_{l\geq 0}\frac{1}{l!}\left(-\frac{i}{2\lambda}\right)^lP^l(f\circ j,g\circ j)\circ j^{-1}\end{equation*}
for $f, g$ functions on ${\mathbb C}^n$.

\subsection{Application to the star product of some Gaussians} \label{sub:92}
As a particular case of Proposition \ref{propWpi} we have that
\begin{align*}W_0(\sigma(t))(z)&=2^n\chi(t) 
\prod_{k=1}^n(1+e^{-i\alpha_k(t)})^{-1} \exp \left(-{\lambda}\vert z\vert^2 \right) \exp \left( 2{\lambda}
\sum_{k=1}^n(1+e^{i\alpha_k(t)})^{-1}\vert z_k\vert^2\right)\\
&=2^n\chi(t) 
\prod_{k=1}^n(1+e^{-i\alpha_k(t)})^{-1}\exp \left(-i\lambda \sum_{k=1}^n \vert z_k\vert^2
\tan \left(\tfrac{1}{2}\alpha_k(t)\right)\right),
\end{align*} 
for each $t\in {\mathbb R}^{m}$, since
\begin{equation*} 2(1+e^{i\alpha_k(t)})^{-1}-1=-i\tan \left(\tfrac{1}{2}\alpha_k(t)\right)\end{equation*} for $k=1,2,\ldots,n$.

Let  $t, t'\in {\mathbb R}^{m}$. We express the relation $\sigma(t+t')=\sigma(t)\sigma(t')$ in terms of the product $\ast_0$, that is, we write
\begin{equation*}W_0(\sigma(t+t'))=W_0(\sigma(t)\sigma(t'))=W_0(\sigma(t))\ast_0
W_0(\sigma(t'))\end{equation*} whenever the functions $W_0(\sigma(t)$, $W_0(\sigma(t')$
and $W_0(\sigma(t+t')$ are well-defined. This gives
\begin{align*}\exp &\left(-i\lambda \sum_{k=1}^n \vert z_k\vert^2
\tan \left(\tfrac{1}{2}\alpha_k(t)\right)\right)\ast_0\exp \left(-i\lambda \sum_{k=1}^n \vert z_k\vert^2
\tan \left(\tfrac{1}{2}\alpha_k(t')\right)\right)\\
=&2^{-n}\prod_{k=1}^n\frac{(1+e^{-i\alpha_k(t)})(1+e^{-i\alpha_k(t')}) }{(1+e^{-i\alpha_k(t+t')})}\exp \left(-i\lambda \sum_{k=1}^n \vert z_k\vert^2
\tan \left(\tfrac{1}{2}\alpha_k(t+t')\right)\right)\\
=&\prod_{k=1}^n \bigl(1-\tan \left(\tfrac{1}{2}\alpha_k(t)\right)\tan \left(\tfrac{1}{2}\alpha_k(t')\right)\bigr)^{-1}
\exp \left(-i\lambda \sum_{k=1}^n \vert z_k\vert^2
\tan \left(\tfrac{1}{2}\alpha_k(t+t')\right)\right).
\end{align*} 
Denoting $u_k:=\tan \left(\tfrac{1}{2}\alpha_k(t)\right)$ and $v_k:=\tan \left(\tfrac{1}{2}\alpha_k(t')\right)$ for $k=1,2,\ldots,n$, we can reformulate this relation as
\begin{align*}\exp \left(-i\lambda \sum_{k=1}^n u_k\vert z_k\vert^2
\right)&\ast_0\exp \left(-i\lambda \sum_{k=1}^nv_k \vert z_k\vert^2
\right)\\=&\prod_{k=1}^n(1-u_kv_k)^{-1}\,\exp \left(-i\lambda \sum_{k=1}^n\frac{u_k+v_k}{1-u_kv_k} \vert z_k\vert^2 \right).\end{align*}
In particular, taking $n=1$ and $\lambda =1$, we get the relation
\begin{equation*}\exp \left(-i  u\vert z\vert^2
\right)\ast_0\exp \left(-i v \vert z\vert^2
\right)=\frac{1}{1-uv}\,\exp \left(-i \frac{u+v}{1-uv} \vert z \vert^2 \right).\end{equation*}
Moreover, by changing $u$ to $-iu$ and $v$ to $-iv$ in this relation, we obtain
\begin{equation*}\exp \left( -u\vert z\vert^2
\right)\ast_0\exp \left(-v \vert z\vert^2
\right)\\=\frac{1}{1+uv}\,\exp \left( -\frac{u+v}{1+uv} \vert z\vert^2 \right)\end{equation*}
or, equivalently,
\begin{equation*}\exp \left( -u(x^2+y^2)
\right)\ast_M\exp \left(-v (x^2+y^2)
\right)=\frac{1}{1+uv}\,\exp \left( -\frac{u+v}{1+uv} (x^2+y^2) \right).\end{equation*}
This last relation is well known, see for instance \cite{CZa, CFZa, Der}.

\subsection{Application to the star exponential of polynomials} \label{sub:93} 
An important problem in Deformation Quantization is the computation of the star exponentials.
Consider, for instance the product $\ast_0$. Then the star exponential of a function $f$
on ${\mathbb C}^{n}$ is given by
\begin{equation*}\exp_{\ast_0}(f):=\sum_{k\geq 0}\,\frac{1}{k!}f^{\ast_0, k}\end{equation*}
where $f^{\ast_0, k}=f\ast_0\ldots \ast_0 f$ ($k$ times) for $k\geq 0$.

Usually, the computation of the star exponential of certain functions $f$ is performed by solving some differential system, see \cite{Arn, BM, BP}. Here we shall use the relation
\begin{equation*}W_0(\pi(\exp(X)))=W_0(\exp(d\pi(X)))=\exp_{\ast_0}(W_0(d\pi(X)))\end{equation*} for $X\in {\mathfrak g}$ together with Proposition \ref{propWpi} and
Proposition \ref{propWdpi} in order to obtain some closed formulas for the star exponential
(for $\ast_0$ and for the Moyal product) of  certain polynomials of degree $\leq 2$.

\begin{lemma} \label{lemexp} Let $X=(t,u,c)\in {\mathfrak g}$. Then, for each $s\in {\mathbb R}$, we have $\exp(sX)=(st,z(s),c(s))$ where $z(s)=(z_1(s),z_2(s),\ldots,z_n(s))$
and $c(s)$ are defined by 
\begin{equation*}z_k(s)=\frac{e^{i\alpha_k(t)s}-1}{i\alpha_k(t)}\,u_k, \quad k=1,2,\ldots,n
\end{equation*} and
\begin{equation*}c(s)=sc+\tfrac{1}{2}\sum_{k=1}^n \vert u_k\vert^2 \frac{\alpha_k(t)s-
\sin(\alpha_k(t)s)}{\alpha_k(t)^2}.\end{equation*}
\end{lemma}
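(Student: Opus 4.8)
The plan is to compute the one-parameter subgroup $\exp(sX)$ directly by solving the defining differential equation $\frac{d}{ds}\exp(sX)=\exp(sX)\cdot X$ (or equivalently the left-invariant version) in the coordinates $(t,z,c)$ of $G$, using the explicit multiplication law and Lie bracket structure given in Section \ref{sec:4}. First I would write $\exp(sX)=(st,z(s),c(s))$, where the ${\mathbb R}^m$-component is forced to be linear in $s$ because the projection $G\to {\mathbb R}^m$ is a group homomorphism onto an abelian group and $X$ has ${\mathbb R}^m$-part $t$; this pins down the first coordinate immediately and reduces the problem to determining $z(s)$ and $c(s)$.

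For the $z(s)$-component, I would differentiate the group law. Writing the curve $\gamma(s)=(st,z(s),c(s))$ and imposing $\gamma(s+\epsilon)=\gamma(s)\cdot\gamma(\epsilon)$, the second-coordinate rule $t'\cdot z+z'$ gives, after extracting the linear term in $\epsilon$, a first-order linear ODE of the form $z_k'(s)=i\alpha_k(t)z_k(s)+u_k$ for each $k$, coming from the infinitesimal action $A(st)$ acting on the increment plus the Lie-algebra term $u$. The integrating-factor solution of $z_k'(s)=i\alpha_k(t)z_k(s)+u_k$ with $z_k(0)=0$ is exactly $z_k(s)=\dfrac{e^{i\alpha_k(t)s}-1}{i\alpha_k(t)}\,u_k$, which is the claimed formula; I would note that this expression is entire in $\alpha_k(t)$ and reduces to $s\,u_k$ in the degenerate case $\alpha_k(t)=0$, so no genuine singularity arises.

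For the central component $c(s)$, I would again differentiate the multiplication law, whose third coordinate is $c+c'+\tfrac12\omega((z,\bar z),(t\cdot z',\overline{t\cdot z'}))$. The linear-in-$\epsilon$ term produces $c'(s)=c+\tfrac12\,\omega\big((z(s),\overline{z(s)}),(X\text{-increment})\big)$, which after using the definition $\omega((z,w),(z',w'))=\tfrac{i}{2}(zw'-z'w)$ and substituting the already-found $z(s)$ becomes $c'(s)=c+\tfrac12\sum_k|u_k|^2\,\dfrac{1-\cos(\alpha_k(t)s)}{\alpha_k(t)}$. Integrating in $s$ with $c(0)=0$ gives $c(s)=sc+\tfrac12\sum_k|u_k|^2\,\dfrac{\alpha_k(t)s-\sin(\alpha_k(t)s)}{\alpha_k(t)^2}$, matching the statement.

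The main obstacle I anticipate is not solving the ODEs, which are elementary, but rather carefully extracting the correct infinitesimal relations from the nonabelian multiplication law: the factor $t'\cdot z'$ appearing inside $\omega$ and the twisting by $A(st)$ must be expanded consistently to first order in $\epsilon$, and the real/imaginary bookkeeping in $\omega$ (so that $c(s)$ comes out real, as it must) requires care. I would therefore verify the final formulas by a consistency check — confirming that $\gamma(s)\gamma(s')=\gamma(s+s')$ holds, which amounts to the addition formulas for the closed expressions — and by checking the limiting behavior as $\alpha_k(t)\to 0$, where one should recover the pure Heisenberg exponential $z_k(s)=s\,u_k$, $c(s)=sc$.
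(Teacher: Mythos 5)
Your proposal is correct and follows essentially the same route as the paper: both derive the formulas by differentiating the one-parameter subgroup identity (you via $\gamma(s+\epsilon)=\gamma(s)\cdot\gamma(\epsilon)$, the paper via the functional equations from $\exp((s_1+s_2)X)=\exp(s_1X)\exp(s_2X)$ differentiated at $s_1=0$), arriving at the identical ODEs $z_k'(s)=i\alpha_k(t)z_k(s)+u_k$ and $c'(s)=c+\tfrac{1}{2}\sum_k\vert u_k\vert^2\frac{1-\cos(\alpha_k(t)s)}{\alpha_k(t)}$, which are then solved and integrated exactly as in the paper's proof. Your added checks (the $\alpha_k(t)\to 0$ limit and the sign bookkeeping for the twist inside $\omega$) are sound but not needed beyond what the paper does.
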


\begin{proof} Let $X=(t,u,c)\in {\mathfrak g}$. Write  $\exp(sX)=(t(s),z(s),c(s))$. Then the relation $\exp((s_1+s_2)X)=\exp(s_1X)\exp(s_2X)$ for $s_1,s_2\in {\mathbb R}$ gives the following functional equations for the functions $t(s),z(s)$ and $c(s)$
\begin{equation*} \left\{
\begin{aligned}
&t(s_1+s_2)=t(s_1)+t(s_2),\\
&z(s_1+s_2)=z(s_1)+t(s_1)\cdot z(s_2),\\
&c(s_1+s_2)=c(s_1)+c(s_2)+\tfrac{1}{2}\omega( (z(s_1),\overline{z(s_1)}), (t(s_1)\cdot z(s_2),\overline{t(s_1)\cdot z(s_2)})).\\
\end{aligned}
\right. \end{equation*}
The first equation of the system gives $t(s)=st$. By differentiating the second equation at
$s_1=0$, we get
\begin{equation*}z'_k(s)=u_k+i\alpha_k(t)z_k(s),\quad k=1,2,\ldots,n.
\end{equation*}
Such differential equations are easy to solve and we find the announced formula for $z_k(s)$
for $k=1,2,\ldots,n$. Finally, by differentiating the third equation at $s_1=0$, we have
\begin{align*} c'(s)=&c+\tfrac{1}{4}i(u\overline{z(s)}-{\bar u}z(s))\\
=&c+\tfrac{1}{2}\sum_{k=1}^n \vert u_k\vert^2 \frac{1-
\cos(\alpha_k(t)s)}{\alpha_k(t)}.
\end{align*}
By integrating this last equation, we obtain the desired formula for $c(s)$.
\end{proof}

Now, we can reformulate Proposition \ref{propWpi} as follows.
\begin{proposition} \label{propWexp} Let $X=(t,u,c)\in {\mathfrak g}$. Then we have
\begin{align*}W_0&(\pi(\exp(X)))(z)=
2^n\chi(t) e^{i\lambda c} \prod_{k=1}^n(1+e^{-i\alpha_k(t)})^{-1}\\
\times & \exp \left(i\tfrac{\lambda}{2} \sum_{k=1}^n\vert u_k\vert^2
\frac{\alpha_k(t)-2\tan(\tfrac{1}{2}\alpha_k(t))}{\alpha_k(t)^2}\right)\\
\times & \exp\left(-i\lambda \sum_{k=1}^n\vert z_k\vert^2\tan\left(\tfrac{1}{2}\alpha_k(t)\right) \right)\\
\times &\exp \left(\lambda \sum_{k=1}^n\frac{1}{\alpha_k(t)}\tan\left(\tfrac{1}{2}\alpha_k(t)\right)
(z_k\bar{u_k}-\bar{z_k}u_k)\right).
\end{align*}
\end{proposition}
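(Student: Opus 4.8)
The plan is to substitute the explicit description of $\exp(X)$ furnished by Lemma \ref{lemexp} into the formula for $W_0(\pi(g))$ of Proposition \ref{propWpi}, and then to reduce the outcome to the announced expression by elementary trigonometric identities. Throughout I abbreviate $\theta_k:=\alpha_k(t)$. Writing $\exp(X)=(t,z_0,c_0)$ with $z_0=(a_1,\dots,a_n)$, Lemma \ref{lemexp} gives $a_k=\frac{e^{i\theta_k}-1}{i\theta_k}u_k$ and $c_0=c+\tfrac{1}{2}\sum_k\vert u_k\vert^2\frac{\theta_k-\sin\theta_k}{\theta_k^2}$; in particular $\vert a_k\vert^2=\frac{4\sin^2(\tfrac{1}{2}\theta_k)}{\theta_k^2}\vert u_k\vert^2$, using $\vert e^{i\theta_k}-1\vert^2=2(1-\cos\theta_k)=4\sin^2(\tfrac{1}{2}\theta_k)$. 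Inserting these into the first displayed formula of Proposition \ref{propWpi}, the factor $e^{i\lambda c_0}$ splits as $e^{i\lambda c}$ (which becomes part of the prefactor of the claim) times $\exp\bigl(i\tfrac{\lambda}{2}\sum_k\vert u_k\vert^2\frac{\theta_k-\sin\theta_k}{\theta_k^2}\bigr)$.

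I would then collect the exponent of Proposition \ref{propWpi} according to the three types of quadratic monomials occurring, namely the terms in $\vert z_k\vert^2$, the cross terms in $a_k\bar z_k$ and $\bar a_k z_k$, and the terms in $\vert a_k\vert^2$. The coefficient of $\vert z_k\vert^2$ equals $-\lambda+2\lambda(1+e^{i\theta_k})^{-1}$, which collapses to $-i\lambda\tan(\tfrac{1}{2}\theta_k)$ by the identity $2(1+e^{i\theta_k})^{-1}-1=-i\tan(\tfrac{1}{2}\theta_k)$ recorded in Subsection \ref{sub:92}; this is exactly the third exponential factor of the statement. The coefficients of $a_k\bar z_k$ and of $\bar a_k z_k$ simplify to $-\lambda(1+e^{i\theta_k})^{-1}$ and $\lambda e^{i\theta_k}(1+e^{i\theta_k})^{-1}$ respectively; substituting the value of $a_k$ and using $\frac{e^{i\theta_k}-1}{e^{i\theta_k}+1}=i\tan(\tfrac{1}{2}\theta_k)$, both become $\frac{\lambda}{\theta_k}\tan(\tfrac{1}{2}\theta_k)$ multiplied by $-u_k\bar z_k$ and $\bar u_k z_k$ respectively, so that their sum yields the fourth exponential factor $\lambda\sum_k\frac{1}{\theta_k}\tan(\tfrac{1}{2}\theta_k)(z_k\bar u_k-\bar z_k u_k)$.

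The step I expect to be the most delicate is the coefficient of $\vert u_k\vert^2$, because two contributions must be merged: the piece $i\tfrac{\lambda}{2}\frac{\theta_k-\sin\theta_k}{\theta_k^2}$ coming from $e^{i\lambda c_0}$ and the piece coming from $-\tfrac{\lambda}{4}\vert z_0\vert^2$ together with the $\vert a_k\vert^2$-part of the last exponential in Proposition \ref{propWpi}. The combined coefficient of $\vert a_k\vert^2$ in the latter is $-\tfrac{\lambda}{4}+\tfrac{\lambda}{2}(1+e^{i\theta_k})^{-1}=-\tfrac{i\lambda}{4}\tan(\tfrac{1}{2}\theta_k)$, again by the same identity; multiplying by $\vert a_k\vert^2=\frac{4\sin^2(\tfrac{1}{2}\theta_k)}{\theta_k^2}\vert u_k\vert^2$ and invoking the elementary identity $2\tan(\tfrac{1}{2}\theta_k)\cos^2(\tfrac{1}{2}\theta_k)=\sin\theta_k$, the sum of the two contributions collapses precisely to $i\tfrac{\lambda}{2}\frac{\theta_k-2\tan(\tfrac{1}{2}\theta_k)}{\theta_k^2}\vert u_k\vert^2$, which is the second exponential factor. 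Reinstating the prefactor $2^n\chi(t)e^{i\lambda c}\prod_k(1+e^{-i\theta_k})^{-1}$, read off directly from Proposition \ref{propWpi}, then completes the identification with the claimed formula.
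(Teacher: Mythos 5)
Your proposal is correct and follows exactly the route of the paper's own proof, namely substituting the expression for $\exp(X)$ from Lemma \ref{lemexp} into the second formula of Proposition \ref{propWpi} and simplifying via the identity $2(1+e^{i\theta})^{-1}-1=-i\tan(\tfrac{1}{2}\theta)$. The only difference is that you spell out the ``elementary calculations'' the paper leaves implicit, and all three of your exponent computations (the $\vert z_k\vert^2$, cross, and $\vert u_k\vert^2$ terms, the last merging the $e^{i\lambda c_0}$ contribution with $-\tfrac{i\lambda}{4}\tan(\tfrac{1}{2}\theta_k)\vert a_k\vert^2$ via $2\tan(\tfrac{1}{2}\theta_k)\cos^2(\tfrac{1}{2}\theta_k)=\sin\theta_k$) check out.
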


\begin{proof} We apply Proposition \ref{propWpi} with $g=\exp(X)$ for $X=(t,u,c)\in {\mathfrak g}$ using the expression of $\exp(X)$ given by Lemma \ref{lemexp}. The
obtained expression for $W_0(\exp(X))(z)$ is then simplified by some elementary calculations.
\end{proof}

\begin{corollary} \label{corW0} Let $a\in {\mathbb C}^{n}$, $c_0\in {\mathbb R}$, $b_k\in
{\mathbb R}$ for $k=1,2,\ldots,n$. Assume that $b_k\not= 0$ for each $k=1,2,\ldots,n$ and consider the polynomial
\begin{equation*}P(z)=ic_0+{\bar a}z-a{\bar z}+i\sum_{k=1}^n b_k\vert z_k\vert^2.
\end{equation*}
Then we have
\begin{align*} &\exp_{\ast_0} (P)(z)=e^{ic_0}\left(\prod_{k=1}^n\cos(\tfrac{1}{\lambda}b_k)\right)^{-1}\\
&\times \exp \left(i\lambda \sum_{k=1}^n \vert a_k\vert^2 \left(-\tfrac{1}{\lambda b_k}+\tfrac{1}{ b_k^2}\tan \left( \tfrac{1}{\lambda}b_k\right)\right)\right)\\
&\times\exp \left(i\lambda \sum_{k=1}^n \vert z_k\vert^2 \tan \left(\tfrac{1}{\lambda}b_k\right)\right)\\
&\times \exp \left( \lambda \sum_{k=1}^n \tfrac{1}{b_k} \tan \left( \tfrac{1}{\lambda}b_k\right)\left(z_k\bar{a_k}-a_k\bar{z_k}\right)\right).
\end{align*} 
In particular, if $a=0$ and $c_0=0$ then we obtain
\begin{equation*}\exp_{\ast_0} (i\sum_{k=1}^n b_k\vert z_k\vert^2)=\left(\prod_{k=1}^n\cos(\tfrac{1}{\lambda}b_k)\right)^{-1}\exp \left(i\lambda \sum_{k=1}^n \vert z_k\vert^2 \tan \left(\tfrac{1}{\lambda}b_k\right)\right).  \end{equation*}
\end{corollary}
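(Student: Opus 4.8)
The plan is to realize the quadratic polynomial $P$ as the complex Weyl symbol $W_0(d\pi(X))$ of a derived representation operator, and then to read off $\exp_{\ast_0}(P)$ from the intertwining identity $\exp_{\ast_0}(W_0(d\pi(X)))=W_0(\pi(\exp X))$ stated just before Lemma \ref{lemexp}, using the closed formula of Proposition \ref{propWexp}. First I would solve $W_0(d\pi(X))=P$ for $X=(t,u,c)\in\mathfrak g$. Matching the expression of Proposition \ref{propWdpi} with $P$ term by term in $z$, the linear parts force (consistently from both the $\bar a z$ and the $-a\bar z$ contributions)
\[ u_k=\tfrac{2}{\lambda}a_k,\qquad k=1,2,\ldots,n; \]
the coefficient of $|z_k|^2$ gives $\alpha_k(t)=-\tfrac{2}{\lambda}b_k$, and the scalar part yields $d\chi(t)+i\lambda c+\tfrac{i}{2}\sum_k\alpha_k(t)=ic_0$, which (since $d\chi(t)$ is purely imaginary) determines a real $c$ once $t$ is fixed.

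The only genuine obstacle is the \emph{existence} of a $t\in{\mathbb R}^m$ with $\alpha_k(t)=-2b_k/\lambda$ for all $k$, which need not hold for an arbitrary choice of the forms $\alpha_k$. This is resolved by noting that the product $\ast_0$ is intrinsic: it is built from $W_0$ on the Fock space alone, and by the expansion of Subsection \ref{sub:91} it depends only on $n$ and $\lambda$, not on the group data $m,\alpha_1,\ldots,\alpha_n$. Hence in evaluating $\exp_{\ast_0}(P)$ we are free to choose the generalized diamond group conveniently, for instance $m=n$ with $\alpha_k$ the $k$-th coordinate form, so that the prescribed values are attained by a unique $t$. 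The hypothesis $b_k\neq0$ guarantees $\alpha_k(t)\neq0$ (needed since $1/\alpha_k(t)$ occurs in Proposition \ref{propWexp}), and the validity of Proposition \ref{propWpi} requires $\alpha_k(t)\notin\pi+2\pi{\mathbb Z}$, which is exactly the condition $\cos(b_k/\lambda)\neq0$ on the locus where the stated formula is defined.

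With such an $X$ chosen, the intertwining identity gives $\exp_{\ast_0}(P)=W_0(\pi(\exp X))$, and it remains to substitute $\alpha_k(t)=-2b_k/\lambda$, $u_k=2a_k/\lambda$, and $\chi(t)e^{i\lambda c}=e^{d\chi(t)+i\lambda c}=e^{ic_0+i\sum_k b_k/\lambda}$ into Proposition \ref{propWexp} and simplify. This last step is routine: writing $1+e^{-i\alpha_k(t)}=2e^{-i\alpha_k(t)/2}\cos(\alpha_k(t)/2)$ gives $2^n\prod_k(1+e^{-i\alpha_k(t)})^{-1}=\prod_k e^{-ib_k/\lambda}/\cos(b_k/\lambda)$, whose phases cancel the factor $e^{i\sum_k b_k/\lambda}$ to leave the prefactor $e^{ic_0}\bigl(\prod_k\cos(b_k/\lambda)\bigr)^{-1}$; using that $\tan$ is odd and $\cos$ is even, the three remaining exponents (in $|u_k|^2$, in $|z_k|^2$, and in $z_k\bar{a_k}-a_k\bar{z_k}$) turn into precisely the stated factors. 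The special case $a=0,\,c_0=0$ then follows immediately, the second and fourth exponential factors reducing to $1$.
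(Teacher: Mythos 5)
Your proof is correct and takes essentially the same route as the paper's: the paper likewise takes $\chi\equiv 1$ and chooses the forms $\alpha_k$ and $X=(t,u,c)$ with $\alpha_k(t)=-\tfrac{2}{\lambda}b_k$, $u=\tfrac{2}{\lambda}a$, $c=\tfrac{1}{\lambda}c_0+\tfrac{1}{\lambda^2}\sum_{k=1}^n b_k$, so that $W_0(d\pi(X))=P$, and then applies the relation $\exp_{\ast_0}(W_0(d\pi(X)))=W_0(\pi(\exp X))$ together with Proposition \ref{propWexp}. Your explicit justification for the freedom to choose the group data $m,\alpha_1,\ldots,\alpha_n$ (namely that $\ast_0$ is intrinsic to the Fock space, depending only on $n$ and $\lambda$) is left implicit in the paper's phrase ``we can take $\chi\equiv 1$ and choose $\alpha_k$,'' so if anything your write-up is slightly more complete on that point.
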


\begin{proof} Recall that for each $X=(t,u,c)\in {\mathfrak g}$, we have
\begin{equation*}W_0(d\pi(X))(z)=d\chi(t)+i\lambda c+\tfrac{\lambda}{2}({\bar u}z-
{\bar z}u)+\tfrac{1}{2}i\sum_{k=1}^n\alpha_k(t)(1-\lambda \vert z_k\vert^2),
\end{equation*} see Proposition \ref{propWdpi}. We can take $\chi \equiv 1$ and choose 
$\alpha_k$, $k=1,2,\ldots,n$, and $X$ such that
\begin{equation*}\alpha_k(t)=-\tfrac{2}{\lambda}b_k, \,\, k=1,2,\ldots,n, \,\, u=\tfrac{2}{\lambda}a,\,\, c=\tfrac{1}{\lambda}c_0+\tfrac{1}{\lambda^2}
\sum_{k=1}^n b_k.\end{equation*}
Then we have $W_0(d\pi(X))=P$, hence 
\begin{equation*}\exp_{\ast_0}(P)=\exp_{\ast_0}(W_0(d\pi(X)))=W_0(\exp (d\pi(X)))=
W_0(\pi(\exp(X)))\end{equation*} and the result follows from Proposition \ref{propWexp}.
\end{proof} 

We can also formulate Corollary \ref{corW0} in terms of the Moyal product.

\begin{corollary} \label{corW1} Consider the polynomial 
\begin{equation*} P(x,y)=ic_0+2i(-vx+uy)+i\sum_{k=1}^n b_k(x_k^2+y_k^2)
\end{equation*} where $c_0\in {\mathbb R}$, $u,v\in {\mathbb R}^n$ and $b_k\in
{\mathbb R}$ for $k=1,2,\ldots,n$. Then we have
\begin{align*} &\exp_{\ast_M} (P)(x,y)=e^{ic_0}\left(\prod_{k=1}^n\cos(b_k)\right)^{-1}\\
&\times \exp \left( i\sum_{k=1}^n(u_k^2+v_k^2)\left(\tfrac{1}{b_k^2}\tan (b_k)-\tfrac{1}{b_k}\right)\right)\\
&\times \exp \left(i \sum_{k=1}^n(x_k^2+y_k^2)\tan (b_k)\right)\\
&\times \exp \left( 2i\sum_{k=1}^n\tfrac{1}{b_k}\tan (b_k)(y_ku_k-v_kx_k)\right).
\end{align*}
\end{corollary}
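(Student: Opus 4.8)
The plan is to deduce this corollary from Corollary \ref{corW0} by transporting the star-exponential identity through the algebra isomorphism relating $\ast_0$ and $\ast_M$ that was set up in Subsection \ref{sub:91}. Specializing to $\lambda=1$, the relation $f\ast_0 g=((f\circ j)\ast_1(g\circ j))\circ j^{-1}$ together with the fact that $\ast_1=\ast_M$ when $\lambda=1$ shows that the map $\Phi(f):=f\circ j$ satisfies $\Phi(f\ast_0 g)=\Phi(f)\ast_M\Phi(g)$. Since $\Phi$ is linear and bijective with $\Phi(1)=1$, it sends each $\ast_0$-power $P^{\ast_0,k}$ to the corresponding $\ast_M$-power $\Phi(P)^{\ast_M,k}$, hence it carries the defining series of $\exp_{\ast_0}(P)$ term by term into that of $\exp_{\ast_M}(\Phi(P))$; since the former converges to the explicit function of Corollary \ref{corW0} and $\Phi$ is continuous, convergence is preserved and
\begin{equation*}\exp_{\ast_M}(\Phi(P))=\Phi\bigl(\exp_{\ast_0}(P)\bigr).\end{equation*}

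Next I would check that $\Phi$ carries the polynomial of Corollary \ref{corW0} onto the polynomial $P(x,y)$ of the present statement. Writing $a_k=u_k+iv_k$ and substituting $z_k=x_k+iy_k$, one computes $\vert z_k\vert^2\mapsto x_k^2+y_k^2$, ${\bar a}z-a{\bar z}\mapsto 2i(uy-vx)$ and $z_k{\bar a_k}-a_k{\bar z_k}\mapsto 2i(u_ky_k-v_kx_k)$, while the constant $ic_0$ is left unchanged. Thus $\Phi$ maps $ic_0+{\bar a}z-a{\bar z}+i\sum_k b_k\vert z_k\vert^2$ exactly onto $ic_0+2i(-vx+uy)+i\sum_k b_k(x_k^2+y_k^2)$, so that $\exp_{\ast_M}(P)$ is $\Phi$ applied to the right-hand side of Corollary \ref{corW0} taken with $\lambda=1$.

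Finally I would apply $\Phi$ factor by factor to that right-hand side: the scalar $e^{ic_0}\bigl(\prod_k\cos b_k\bigr)^{-1}$ is untouched, $\vert a_k\vert^2$ becomes $u_k^2+v_k^2$ in the second exponential, $\vert z_k\vert^2$ becomes $x_k^2+y_k^2$ in the third, and the factor $z_k{\bar a_k}-a_k{\bar z_k}$ in the last exponential becomes $2i(u_ky_k-v_kx_k)$, producing precisely the four factors of the asserted formula. The argument is essentially mechanical once the isomorphism is in place; the one point needing care, and the step I expect to be the real (if modest) obstacle, is the bookkeeping of real and imaginary parts in the two linear terms, namely verifying ${\bar a}z-a{\bar z}\mapsto 2i(uy-vx)$ and $z_k{\bar a_k}-a_k{\bar z_k}\mapsto 2i(u_ky_k-v_kx_k)$ under the convention $a_k=u_k+iv_k$, since a sign slip there would interchange the roles of $u$ and $v$ in the final expression.
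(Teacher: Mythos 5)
Your proposal is correct and follows essentially the same route as the paper: set $\lambda=1$, use the intertwining relation $(f\ast_0 g)\circ j=(f\circ j)\ast_M(g\circ j)$ to transport the star exponential, and apply Corollary \ref{corW0} with $a=u+iv$. Your sign bookkeeping is also right — with $a_k=u_k+iv_k$ and $z_k=x_k+iy_k$ one indeed gets ${\bar a}z-a{\bar z}=2i(uy-vx)$ and $z_k\bar{a_k}-a_k\bar{z_k}=2i(u_ky_k-v_kx_k)$ — so the four factors match exactly as you claim.
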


\begin{proof} Take $\lambda =1$. Then we have
\begin{equation*}(f\ast_0 g)\circ j=(f\circ j)\ast_M(g\circ j) \end{equation*}
for suitable functions $f, g$ on ${\mathbb C}^n$. Hence we have
\begin{equation*}\exp_{\ast_M}(P\circ j)=\exp_{\ast_0}(P)\circ j\end{equation*}
and we can apply Corollary \ref{corW0} with $a=u+iv$, $u,v\in {\mathbb R}^n$.
\end{proof}

Note that Corollary \ref{corW1} gives an expression for the Weyl symbol of the exponential of the operator on $L^2({\mathbb R}^n)$ whose Weyl symbol is the above polynomial $P(x,y)$. For more general results about arbitrary polynomials of degree $\leq 2$, see \cite{CaComp, Ho2}.

Note also that in \cite{BP}, the metaplectic representation of the non homogeneous symplectic group is constructed by using computations of star exponentials (for the Moyal product). It is, in some sense, the opposite process to the one we followed here.

\end{document}